\def\RSthmtxt{theorem~}\newref{thm}{name = \RSthmtxt}}
\def\RSlemtxt{lemma~}\newref{lem}{name = \RSlemtxt}}
\numberwithin{equation}{section}
\theoremstyle{remark}
\newtheorem*{notation*}{\protect\notationname}
\theoremstyle{plain}
\newtheorem{assumption}{\protect\assumptionname}
\theoremstyle{remark}
\newtheorem{rem}{\protect\remarkname}[section]
\theoremstyle{plain}
\newtheorem{prop}{\protect\propositionname}[section]
\theoremstyle{plain}
\newtheorem{thm}{\protect\theoremname}[section]
\theoremstyle{plain}
\newtheorem{lem}{\protect\lemmaname}[section]
\setlist[enumerate,1]{label=\upshape{(\roman*)}, ref=(\roman*)}
\setlist[enumerate,2]{label=\upshape{(\alph*)}, ref=(\alph*)}
\setlist[enumerate,3]{label=\upshape{\roman*.}, ref=\roman*}
  \def\olabel#1#2{\begingroup
     \def\@currentlabel{#2}%
     \zlabel{#1}\endgroup
  }
\newcommand{\objlabel}[2]{\olabel{obj:#2}{#1}#1}%
\newcommand{\objref}[1]{\zref{obj:#1}}
\date{}
\g@addto@macro\bfseries{\boldmath}
\numberwithin{equation}{section}
\newcommand{\lowermath}[1]{\mathpalette{\raisem@th{#1}}}
\newcommand{\raisem@th}[3]{\raisebox{-#1}{$#2#3$}}
\newcounter{sectemp}
\newcounter{eqtemp}
\newcommand{\restoreposition}[1]{
  \setcounter{section}{\value{sectemp}}
  \setcounter{equation}{\value{eqtemp}}
}
\renewcommand{\left}{\mleft}
\renewcommand{\right}{\mright}
\providecommand{\LyX}{L\kern-.0467em\lower.25em\hbox{Y}\kern-.025emX\@} 
\newcommand*{\getlength}[1]{\strip@pt#1}
\newlength{\tempjot}
\newcommand \cellfill {\leavevmode \leaders \hb@xt@ .44em{\hss .\hss }\hfill \kern \z@}
\newcommand\smaller[2][0.85]{{\scalefont{#1}#2}}
\providecommand{\assumptionname}{Assumption}
\providecommand{\lemmaname}{Lemma}
\providecommand{\notationname}{Notation}
\providecommand{\propositionname}{Proposition}
\providecommand{\remarkname}{Remark}
\providecommand{\theoremname}{Theorem}
\begin{document}
\selectlanguage{english}%
% Macros, Version 2
% Adopted: 2012-06-19

% FORMATTING

\global\long\def\uwrite#1#2{\underset{#2}{\underbrace{#1}} }%

\global\long\def\blw#1{\ensuremath{\underline{#1}}}%

\global\long\def\abv#1{\ensuremath{\overline{#1}}}%

\global\long\def\vect#1{\mathbf{#1}}%

% SETS AND SEQUENCES

\global\long\def\smlseq#1{\{#1\} }%

\global\long\def\seq#1{\left\{  #1\right\}  }%

\global\long\def\smlsetof#1#2{\{#1\mid#2\} }%

\global\long\def\setof#1#2{\left\{  #1\mid#2\right\}  }%

% LIMITS

\global\long\def\goesto{\ensuremath{\rightarrow}}%

\global\long\def\ngoesto{\ensuremath{\nrightarrow}}%

\global\long\def\uto{\ensuremath{\uparrow}}%

\global\long\def\dto{\ensuremath{\downarrow}}%

\global\long\def\uuto{\ensuremath{\upuparrows}}%

\global\long\def\ddto{\ensuremath{\downdownarrows}}%

\global\long\def\ulrto{\ensuremath{\nearrow}}%

\global\long\def\dlrto{\ensuremath{\searrow}}%

% FUNCTIONS AND FUNCTION SPACES

\global\long\def\setmap{\ensuremath{\rightarrow}}%

\global\long\def\elmap{\ensuremath{\mapsto}}%

\global\long\def\compose{\ensuremath{\circ}}%

\global\long\def\cont{C}%

\global\long\def\cadlag{D}%

\global\long\def\Ellp#1{\ensuremath{L^{#1}}}%

% SETS OF NUMBERS

\global\long\def\naturals{\ensuremath{\mathbb{N}}}%

\global\long\def\reals{\mathbb{R}}%

\global\long\def\complex{\mathbb{C}}%

\global\long\def\rationals{\mathbb{Q}}%

\global\long\def\integers{\mathbb{Z}}%

% NORMS, MODULI, INNER PRODUCTS

\global\long\def\abs#1{\ensuremath{\left|#1\right|}}%

\global\long\def\smlabs#1{\ensuremath{\lvert#1\rvert}}%
 
\global\long\def\bigabs#1{\ensuremath{\bigl|#1\bigr|}}%
 
\global\long\def\Bigabs#1{\ensuremath{\Bigl|#1\Bigr|}}%
 
\global\long\def\biggabs#1{\ensuremath{\biggl|#1\biggr|}}%

\global\long\def\norm#1{\ensuremath{\left\Vert #1\right\Vert }}%

\global\long\def\smlnorm#1{\ensuremath{\lVert#1\rVert}}%
 
\global\long\def\bignorm#1{\ensuremath{\bigl\|#1\bigr\|}}%
 
\global\long\def\Bignorm#1{\ensuremath{\Bigl\|#1\Bigr\|}}%
 
\global\long\def\biggnorm#1{\ensuremath{\biggl\|#1\biggr\|}}%

% SET OPERATIONS

\global\long\def\Union{\ensuremath{\bigcup}}%

\global\long\def\Intsect{\ensuremath{\bigcap}}%

\global\long\def\union{\ensuremath{\cup}}%

\global\long\def\intsect{\ensuremath{\cap}}%

\global\long\def\pset{\ensuremath{\mathcal{P}}}%

\global\long\def\clsr#1{\ensuremath{\overline{#1}}}%

\global\long\def\symd{\ensuremath{\Delta}}%

\global\long\def\intr{\operatorname{int}}%

\global\long\def\cprod{\otimes}%

\global\long\def\Cprod{\bigotimes}%

% HILBERT SPACES

\global\long\def\smlinprd#1#2{\ensuremath{\langle#1,#2\rangle}}%

\global\long\def\inprd#1#2{\ensuremath{\left\langle #1,#2\right\rangle }}%

\global\long\def\orthog{\ensuremath{\perp}}%

\global\long\def\dirsum{\ensuremath{\oplus}}%

% LINEAR ALGEBRA

\global\long\def\spn{\operatorname{sp}}%

\global\long\def\rank{\operatorname{rk}}%

\global\long\def\proj{\operatorname{proj}}%

\global\long\def\tr{\operatorname{tr}}%

\global\long\def\diag{\operatorname{diag}}%

% PROBABILITY SPACES AND SIGMA-FIELDS

\global\long\def\smpl{\ensuremath{\Omega}}%

\global\long\def\elsmp{\ensuremath{\omega}}%

\global\long\def\sigf#1{\mathcal{#1}}%

\global\long\def\sigfield{\ensuremath{\mathcal{F}}}%
\global\long\def\sigfieldg{\ensuremath{\mathcal{G}}}%

\global\long\def\flt#1{\mathcal{#1}}%

\global\long\def\filt{\mathcal{F}}%
\global\long\def\filtg{\mathcal{G}}%

\global\long\def\Borel{\ensuremath{\mathcal{B}}}%

\global\long\def\cyl{\ensuremath{\mathcal{C}}}%

\global\long\def\nulls{\ensuremath{\mathcal{N}}}%

\global\long\def\gauss{\mathfrak{g}}%

\global\long\def\leb{\mathfrak{m}}%

% Probability and expectation

\global\long\def\prob{P}%

\global\long\def\Prob{\ensuremath{\mathbb{P}}}%

\global\long\def\Probs{\mathcal{P}}%

\global\long\def\PROBS{\mathcal{M}}%

\global\long\def\expect{\mathbb{E}}%

\global\long\def\probspc{\ensuremath{(\smpl,\filt,\Prob)}}%

% Distributions and stochastic convergence

\global\long\def\iid{\ensuremath{\textnormal{i.i.d.}}}%

\global\long\def\as{\ensuremath{\textnormal{a.s.}}}%

\global\long\def\asp{\ensuremath{\textnormal{a.s.p.}}}%

\global\long\def\io{\ensuremath{\ensuremath{\textnormal{i.o.}}}}%

\newcommand\independent{\protect\mathpalette{\protect\independenT}{\perp}}
\def\independenT#1#2{\mathrel{\rlap{$#1#2$}\mkern2mu{#1#2}}}

\global\long\def\indep{\independent}%

\global\long\def\distrib{\ensuremath{\sim}}%

\global\long\def\distiid{\ensuremath{\sim_{\iid}}}%

\global\long\def\asydist{\ensuremath{\overset{a}{\distrib}}}%

\global\long\def\inprob{\ensuremath{\overset{p}{\goesto}}}%

\global\long\def\inprobu#1{\ensuremath{\overset{#1}{\goesto}}}%

\global\long\def\inas{\ensuremath{\overset{\as}{\goesto}}}%

\global\long\def\eqas{=_{\as}}%

\global\long\def\inLp#1{\ensuremath{\overset{\Ellp{#1}}{\goesto}}}%

\global\long\def\indist{\ensuremath{\overset{d}{\goesto}}}%

\global\long\def\eqdist{=_{d}}%

\global\long\def\wkc{\ensuremath{\rightsquigarrow}}%

\global\long\def\fdd{\ensuremath{\rightsquigarrow_{\mathrm{fdd}}}}%

\global\long\def\wkcu#1{\overset{#1}{\ensuremath{\rightsquigarrow}}}%

\global\long\def\plim{\operatorname*{plim}}%

% Moments

\global\long\def\var{\operatorname{var}}%

\global\long\def\lrvar{\operatorname{lrvar}}%

\global\long\def\cov{\operatorname{cov}}%

\global\long\def\corr{\operatorname{corr}}%

\global\long\def\bias{\operatorname{bias}}%

\global\long\def\MSE{\operatorname{MSE}}%

\global\long\def\med{\operatorname{med}}%

% STOCHASTIC PROCESSES

\global\long\def\simple{\mathcal{R}}%

\global\long\def\sring{\mathcal{A}}%

\global\long\def\sproc{\mathcal{H}}%

\global\long\def\Wiener{\ensuremath{\mathbb{W}}}%

\global\long\def\sint{\bullet}%

\global\long\def\cv#1{\left\langle #1\right\rangle }%

\global\long\def\smlcv#1{\langle#1\rangle}%

\global\long\def\qv#1{\left[#1\right]}%

\global\long\def\smlqv#1{[#1]}%

% MISCELLANEOUS

\global\long\def\trans{\ensuremath{\prime}}%

\global\long\def\indic{\ensuremath{\mathbf{1}}}%

\global\long\def\Lagr{\mathcal{L}}%

\global\long\def\grad{\nabla}%

\global\long\def\pmin{\ensuremath{\wedge}}%
\global\long\def\Pmin{\ensuremath{\bigwedge}}%

\global\long\def\pmax{\ensuremath{\vee}}%
\global\long\def\Pmax{\ensuremath{\bigvee}}%

\global\long\def\sgn{\operatorname{sgn}}%

\global\long\def\argmin{\operatorname*{argmin}}%

\global\long\def\argmax{\operatorname*{argmax}}%

\global\long\def\Rp{\operatorname{Re}}%

\global\long\def\Ip{\operatorname{Im}}%

\global\long\def\deriv{\ensuremath{\mathrm{d}}}%

\global\long\def\diffnspc{\ensuremath{\deriv}}%

\global\long\def\diff{\ensuremath{\,\deriv}}%

\global\long\def\i{\ensuremath{\mathrm{i}}}%

\global\long\def\e{\mathrm{e}}%

\global\long\def\sep{,\ }%

\global\long\def\defeq{\coloneqq}%

\global\long\def\eqdef{\eqqcolon}%

%% CROSS REFERENCING

\begin{comment}
\global\long\def\objlabel#1#2{}%

\global\long\def\objref#1{}%
\end{comment}
\selectlanguage{british}%

\selectlanguage{english}%
% Macros for sieve papers
% Adopted: 2013-09-16

\global\long\def\S{\mathcal{S}}%

\global\long\def\M{\mathcal{M}}%

\global\long\def\N{\mathcal{N}}%

\global\long\def\V{\mathcal{V}}%

\global\long\def\U{\mathcal{U}}%

\global\long\def\R{\mathcal{R}}%

\global\long\def\mg{\xi}%

\global\long\def\minidx{k_{0}}%

\global\long\def\minidxm{m_{0}}%

\global\long\def\smltail#1{\smlnorm{#1}_{\omega}}%

\global\long\def\alphnorm#1#2{\norm{#1}_{[#2]}}%

\global\long\def\smlalph#1#2{\smlnorm{#1}_{[#2]}}%

\global\long\def\smlmom#1#2{\smlnorm{#1}_{#2}}%

\newcommandx\RV[1][usedefault, addprefix=\global, 1=]{\mathrm{RV}(#1)}%

\global\long\def\BI{\mathrm{BI}}%

\global\long\def\BInorm#1{\smlnorm{#1}_{\BI}}%

\global\long\def\BIalph#1{\mathrm{BI}_{[#1]}}%

\global\long\def\BILalph#1{\mathrm{BIL}_{[#1]}}%

\global\long\def\BImom#1{\mathrm{BI}_{#1}}%

\global\long\def\BILmom#1{\mathrm{BIL}_{#1}}%

\global\long\def\BIz{\mathrm{BI_{0}}}%

\global\long\def\BIc{\mathrm{BIC}}%

\global\long\def\BIcz{\mathrm{BIC}_{0}}%

\global\long\def\BIl{\mathrm{BIL}}%

\global\long\def\BIlz{\mathrm{BIL}_{0}}%

\global\long\def\Lip{\mathrm{Lip}}%

\global\long\def\Reg{\mathrm{BI_{R}}}%

\global\long\def\locest{\mathcal{L}_{n}}%

\global\long\def\loctime{\mathcal{L}}%

\global\long\def\lmest{\mathcal{\mu}_{n}}%

\global\long\def\lmeas{\mathcal{\mu}}%

\global\long\def\minpr{\mathfrak{m}}%

\global\long\def\maxpr{\mathfrak{M}}%

\global\long\def\boundpr{\mathfrak{U}}%

\global\long\def\major#1{\mathcal{#1}}%

\global\long\def\grid{\mathbb{G}}%

\global\long\def\ucc{\mathrm{ucc}}%

\global\long\def\Hspc{\mathscr{H}}%

\global\long\def\Bspc{\mathscr{B}}%

\global\long\def\Aspc{\mathscr{A}}%

\global\long\def\Cspc{\mathscr{C}}%

\global\long\def\gauss{\phi_{\mathfrak{g}}}%

\global\long\def\Gauss{\Phi_{\mathfrak{g}}}%

\global\long\def\cmpct{\varphi}%

\global\long\def\Fset{\mathscr{F}}%

\global\long\def\Gset{\mathscr{G}}%

\global\long\def\Uset{\mathscr{U}}%

\global\long\def\locp{\kappa}%

\global\long\def\signal{S}%

\global\long\def\cfidx{p_{0}}%

\global\long\def\smlfloor#1{\lfloor#1\rfloor}%

\global\long\def\smlceil#1{\lceil#1\rceil}%

\global\long\def\bkt{[\,]}%

\global\long\def\bktpower{\beta}%

\global\long\def\ctrproc{\nu_{n}}%

\newcommand{\SM}{\textnormal{(SM)}}

\newcommand{\AP}{\textnormal{(AP)}}

\newcommand{\LM}{\textnormal{(LM)}}

\newcommand{\IN}{\textnormal{(IN)}}

\newcommand{\CR}{\textnormal{(CR)}}

\global\long\def\diam{\operatorname{diam}}%

\global\long\def\nseq{e}%

\global\long\def\etamom{q_{0}}%

\global\long\def\bandmax{r_{0}}%

\global\long\def\isbigop{\lesssim_{p}}%

\global\long\def\derivbnd#1{\abv m_{#1}}%

\global\long\def\truncnorm#1{\smlnorm{#1}_{n}}%

\global\long\def\intfnal{\iota}%
\selectlanguage{british}%

\global\long\def\genloc{\mu}%

\global\long\def\gauss{\varphi}%

\global\long\def\std{\kappa}%

\global\long\def\Rho{\mathrm{P}}%

\global\long\def\locsmpl{\lambda}%

\global\long\def\avgfn{\genloc}%

\global\long\def\limdens{\nu}%

\global\long\def\rseqs{\mathcal{P}}%

\global\long\def\stdseq{d}%

\global\long\def\scseq{e}%

\global\long\def\BL{\mathrm{BL}}%

\global\long\def\err{\varepsilon}%

\global\long\def\mean{\theta}%

\global\long\def\leqc#1{\leq_{\lowermath{2.5pt}{#1}}}%

\global\long\def\trikern{\kappa}%

\global\long\def\nullhyp{H_{0}}%

\global\long\def\althyp{H_{1}}%

\global\long\def\nullval{\theta}%

\global\long\def\AsySz{\mathrm{AsySz}}%

\global\long\def\AsyMaxCP{\mathrm{AsyMaxCP}}%

\global\long\def\CP{\mathrm{CP}}%

\global\long\def\NRP{\mathrm{NRP}}%

\global\long\def\ci{\mathcal{C}}%

\global\long\def\regfn{m}%

\global\long\def\Mset{\mathscr{M}}%

\global\long\def\asymprob{\asymp_{p}}%

\global\long\def\hrate{\mathfrak{h}}%

\global\long\def\Xset{\mathscr{X}}%

\global\long\def\estdom{A_{n}}%

\global\long\def\Hset{\mathscr{H}}%

\newcommand{\KAP}{\citetalias{KAP15JoE}}

\defcitealias{KAP15JoE}{KAP}

\newcommand{\WP}{\citetalias{WP09ET}}

\defcitealias{WP09ET}{WP}

\title{Asymptotic Theory for Kernel Estimators under Moderate Deviations
from a Unit Root, with an Application to the Asymptotic Size of Nonparametric
Tests}
\author{James A.\ Duffy\thanks{Corpus Christi College and Department of Economics, University of
Oxford; email: \texttt{james.duffy@economics.ox.ac.uk}. I thank V.~Berenguer-Rico,
S. Mavroeidis, B.~Nielsen, and participants at seminars at Cambridge,
Princeton, Vienna and UCL for comments on an earlier version of this
paper. The manuscript was prepared with \LyX{}~2.3.1 and JabRef~3.8.2.}}
\date{August 2019}
\maketitle
\begin{abstract}
\noindent We provide new asymptotic theory for kernel density estimators,
when these are applied to autoregressive processes exhibiting moderate
deviations from a unit root. This fills a gap in the existing literature,
which has to date considered only nearly integrated and stationary
autoregressive processes. These results have applications to nonparametric
predictive regression models. In particular, we show that the null
rejection probability of a nonparametric $t$ test is controlled uniformly
in the degree of persistence of the regressor. This provides a rigorous
justification for the validity of the usual nonparametric inferential
procedures, even in cases where regressors may be highly persistent.

\vspace{0.8cm}

\noindent \emph{JEL codes}: C14; C22

\vspace{0.4cm}

\noindent \emph{Keywords}: nonparametric regression; predictive regression;
density estimation; uniformly valid inference; moderate deviations
from a unit root; mild integration.
\end{abstract}
\thispagestyle{plain}

\newpage{}

\section{Introduction\label{sec:intro}}

Consider the predictive regression model
\begin{equation}
y_{t}=m(x_{t-1})+u_{t}\label{eq:predintro}
\end{equation}
where $m$ is an unknown function and $u_{t}$ is a martingale difference
sequence. $x_{t}$ is a time series with an unknown \textendash{}
but possibly very high \textendash{} degree of persistence, which
we shall parametrise as
\begin{equation}
x_{t}=\rho x_{t-1}+v_{t},\label{eq:regmodel}
\end{equation}
for $\rho\in\Rho\defeq[-1+\delta,1]$, where $v_{t}$ is weakly dependent.

In this setting, parametric estimators of $m$ are known to have a
limiting distribution that is non-Gaussian, and which depends on the
proximity of $\rho$ to unity. The difficulties that this poses for
inference has spawned a large literature (see e.g.\ \citealp*{CES95ET};
\citealp{CY06JFE}; \citealp{JM06Ecta}; \citealp{MP09}; \citealp{PL13JoE};
and \citealp*{EMW15Ecta}). In contrast, nonparametric estimators
of $m$ have been shown to be asymptotically normal even when regressors
are nearly integrated (see, in particular, \citealp{WP09ET,WP09Ecta}).
Because this is also true when $x_{t}$ is stationary, it has been
recently argued by \citet*[hereafter KAP]{KAP15JoE} that valid inferences
on $m$ may be drawn simply by referring a nonparametric $t$ statistic
to normal critical values. That is, so far as \emph{nonparametric}
inferences are concerned, it is not necessary to make any adjustments
when $\rho$ is close to unity. \KAP{} provide some simulation evidence
in support of this claim.

The primary motivation for the present work is to provide a rigorous
proof of the asymptotic validity of the nonparametric $t$ test, in
the setting of the model \eqref{predintro}\textendash \eqref{regmodel},
thereby putting \KAP{}'s thesis on a surer footing. What do we mean
by `asymptotic validity' in this context? For a test of
\[
\nullhyp:\regfn(x)=\nullval\qquad\text{against}\qquad\althyp:\regfn(x)\neq\nullval
\]
(for a chosen $x\in\reals$), the null rejection probability of the
$t$ test needs to be controlled uniformly over all parameters left
unrestricted by $\nullhyp$: in particular, over all $\rho\in\Rho$.
(For a discussion of this issue in more general contexts, see for
example \citealp{Rom04Sc}, \citealp{Mik07Ecta}, and \citealp*{ACG2011CFDP}.)
More precisely, the nonparametric $t$ test is said to be asymptotically
of size $\alpha$ if
\begin{equation}
\limsup_{n\goesto\infty}\sup_{\rho\in\Rho}\Prob_{\rho}\{\smlabs{\hat{t}_{n}(x)}\geq z_{1-\alpha/2}\}\leq\alpha,\label{eq:asysizecontrol}
\end{equation}
where the `$\rho$' subscript on $\Prob_{\rho}$ indicates the dependence
of this probability on the value of $\rho$ in \eqref{regmodel},
$z_{\tau}$ denotes $\tau$th quantile of the standard normal distribution,
and
\[
\hat{t}_{n}(x)=s_{n}(x)^{-1}[\hat{\regfn}_{n}(x)-\nullval]
\]
denotes the nonparametric $t$ statistic for $\hat{\regfn}_{n}(x)$,
the local level (Nadaraya\textendash Watson) estimator of $m$ at
$x$, and $s_{n}^{2}(x)$ an estimate of its asymptotic variance (see
\subsecref{NPest} below for precise definitions).

Existing limit theory for nonparametric regression estimators establishes
that $\hat{t}_{n}(x)$ is asymptotically normal when $\rho$ is either
fixed and in the stationary region $(\rho<1$), or is local to unity
in the sense that $\rho=1+c/n$ (see, e.g.,\ \citealp{WM02AS,WP09ET,WP09Ecta};
and \KAP{}). As we shall argue in \secref{unifinf}, these results
are sufficient only to establish what might be termed the `pointwise
asymptotic validity' of the $t$ test, i.e.\ that
\[
\limsup_{n\goesto\infty}\Prob_{\rho}\{\smlabs{\hat{t}_{n}(x)}\geq z_{1-\alpha/2}\}\leq\alpha,\ \forall\rho\in\Rho.
\]
To prove \eqref{asysizecontrol}, we additionally need to show that
$\hat{t}_{n}(x)\wkc N[0,1]$ when $x_{t}$ exhibits `moderate deviations
from a unit root', in the sense that $\rho_{n}\goesto1$ but $n(1-\rho_{n})\goesto\infty$;
we refer to these as \emph{mildly integrated} \emph{processes }\citep[see][]{GP06JTSA,PM07JoE}.

Accordingly, \secref{spatialdensity} of this paper provides new asymptotic
theory for sums of integrable transformations of mildly integrated
processes \textendash{} i.e.\ for kernel density estimators applied
to such processes. This fills a significant gap in the existing technical
literature, and allows for a successful proof of \eqref{asysizecontrol}.
The development of this theory relies on an interesting combination
of arguments appropriate to stationary and local-to-unity processes.
The dependence of mildly integrated processes is sufficiently weak
that kernel density estimators converge not to the local time of some
limiting process, but to the standard normal density. In particular,
we have
\begin{equation}
\frac{d_{n}}{nh_{n}}\sum_{t=1}^{n}f\left(\frac{x_{t}-d_{n}a}{h_{n}}\right)\inprob\varphi(a)\int_{\reals}f,\label{eq:milim}
\end{equation}
where $f$ is an integrable function, $d_{n}\defeq\var(x_{n})$, $\varphi(a)\defeq(2\pi)^{-1/2}\e^{-a^{2}/2}$,
and $h_{n}=o(1)$ is a bandwidth sequence. In this respect, mildly
integrated processes are more akin to stationary processes, except
for the noted normality of the limiting density. On the other hand,
they also share the diminished recurrence and slower rates of convergence
characteristic of local-to-unity processes.

The rest of this paper is organised as follows. We begin by outlining
a simplified version of the inferential problem studied by \KAP{}
(Sections~\ref{subsec:dgp}\textendash \ref{subsec:NPest}). We then
provide an explanation of how the asymptotic validity of the $t$
test \textendash{} in the sense of \eqref{asysizecontrol} above \textendash{}
may be established with the aid of new results on integrable transformations
of mildly integrated processes (\subsecref{unif}). These results
are developed in \secref{spatialdensity}. Proofs of the main results
appear in Appendices\ \ref{app:unifproof}\textendash \ref{app:mild}.
Proofs of technical results that are either conceptually straightforward,
or closely related to those that have already appeared in the literature,
are given in the Online Supplement to this article, available at Cambridge
Journals Online (journals.cambridge.org/ect).
\begin{notation*}
All limits are taken as $n\goesto\infty$ unless otherwise stated.
$\reals$ denotes the real numbers. For sequences $\{a_{n}\}$, $\{b_{n}\}$:
$a_{n}\asymp b_{n}$ denotes $\lim_{n\goesto\infty}a_{n}/b_{n}=c\in\reals\backslash\{0\}$,
and $a_{n}\sim b_{n}$ denotes $\lim_{n\goesto\infty}a_{n}/b_{n}=1$.
For positive sequences: $a_{n}\lesssim b_{n}$ denotes $\limsup_{n\goesto\infty}a_{n}/b_{n}<\infty$
\textendash{} equivalently, $a_{n}=O(b_{n})$. For random sequences
$\{x_{n}\}$, $\{y_{n}\}$: $x_{n}\isbigop y_{n}$ denotes $x_{n}=O_{p}(y_{n})$.
$\wkc$ denotes weak convergence in the sense of \citet{VVW96}, and
$\fdd$ the convergence of finite-dimensional distributions. For $x\geq0$,
$\smlfloor x$ denotes the greatest integer less than or equal to
$x$.
\end{notation*}

\section{Nonparametric predictive regression\label{sec:unifinf}}

\subsection{Data generating process\label{subsec:dgp}}

As outlined above, the data generating process (DGP) is the same as
that studied by \KAP{}. We have the following nonlinear predictive
regression model
\begin{equation}
y_{t}=\regfn(x_{t-1})+u_{t}.\label{eq:regression}
\end{equation}
where $\regfn$ and the series $\{x_{t},u_{t}\}$ are assumed to satisfy
the following

\setcounter{assumption}{2901}
\begin{assumption}
\label{ass:reg}~

\begin{enumerate}[{label=\textnormal{\ref{ass:reg}\smaller[0.76]{\arabic*}},leftmargin=1.5cm}]
\item \label{enu:regfn} $\regfn$ is Lipschitz continuous.
\item \label{enu:iidseq} $\{\err_{t}\}$ is a scalar i.i.d.\ sequence;
$\err_{0}$ has a characteristic function $\psi_{\err}(\lambda)\defeq\expect\e^{\i\lambda\err_{0}}$
that is integrable, and a probability density $f_{\err}$ that is
Lipschitz continuous and everywhere nonzero; $\expect\err_{0}=0$
and $\expect\err_{0}^{2}=1$.
\item \label{enu:regproc}$\{x_{t}\}$ and $\{v_{t}\}$ are generated according
to 
\begin{align}
x_{t} & =\rho x_{t-1}+v_{t} & v_{t} & \defeq\sum_{k=0}^{\infty}\phi_{k}\err_{t-k},\label{eq:regproc}
\end{align}
with $x_{0}=0$; $\rho\in\Rho\defeq[-1+\delta,1]$ for some $\delta>0$;
$\phi_{0}\neq0$; $\sum_{k=0}^{\infty}\smlabs{\phi_{k}}<\infty$;
and $\phi\defeq\sum_{k=0}^{\infty}\phi_{k}\neq0$.
\item \label{enu:regdist} $\{u_{t}\}$ is a martingale difference sequence
with respect to $\filtg_{t}\defeq\sigma(\{x_{s},u_{s}\}_{s\leq t})$,
with $\expect[u_{t}^{2}\mid\filtg_{t-1}]=\sigma_{u}^{2}$ a.s.\ constant,
and $\sup_{t}\expect[\smlabs{u_{t}}^{4}\mid\filtg_{t-1}]<\infty$
a.s.
\end{enumerate}
\end{assumption}

\begin{rem}
(a) Our assumptions closely correspond to those of \KAP{}. In particular,
\enuref{regproc} is cognate with their Assumptions~2.3 and 2.4,
with the key difference that we do not restrict $\{x_{t}\}$ to the
local-to-unity region, in which $\rho=1+\tfrac{c}{n}$ for some fixed
$c\in\reals$. We instead allow $\rho$ to range over the entirety
of $\Rho=[-1+\delta,1]$. On the other hand, $\sum_{k=0}^{\infty}\smlabs{\phi_{k}}<\infty$
implies that $\{v_{t}\}$ is a short-memory process, and so excludes
the long-memory and anti-persistent cases that are also considered
in \KAP{}. While it is likely that our results could be extended
to cover these cases, we have excluded these to keep this paper to
a manageable length.

(b) Owing to the initialisation $x_{0}=0$, the regressor process
is nonstationary, regardless of the value of $\rho$. However, \eqref{regproc}
has a stationary solution when $\rho<1$, which corresponds to the
weak limit of $x_{n}$ as $n\goesto\infty$. The assumption of a fixed
initialisation is made only for convenience; our results below would
still hold provided $x_{0}$ is stochastically bounded (and adapted
to $\filtg_{0}$).

(c) The assumption that $f_{\err}$ is Lipschitz is used only in the
stationary region, i.e.\ when $\rho<1$, to facilitate the direct
application of results from \citet*{WHH10SP}. Strict positivity of
$f_{\err}$ is also assumed merely for convenience, to ensure that
the stationary solution to \eqref{regproc} has a density that is
strictly positive at every $x\in\reals$, thereby avoiding any inadvertent
attempts to estimate $\regfn(x)$ at points of zero density. (Aside
from ensuring such points are avoided, this assumption is \emph{not}
needed for \propref{size} below.)
\end{rem}

\subsection{Estimation and inference\label{subsec:NPest}}

\KAP{} develop two nonparametric tests for the `predictability'
of $y_{t}$ by $x_{t-1}$, each of which involve taking either the
average or the maximum of a finite collection of nonparametric $t$
statistics, evaluated at selected points in the domain of the regressor.
Critical values for these tests are derived from the normal distribution,
which is justified if each of the $t$ statistics are asymptotically
normal (and asymptotically independent). In what follows, we consider
a simplified version of their testing problem, which involves testing
hypotheses about the value of $m$ at a single $x\in\reals$, by comparing
a $t$ statistic to normal critical values. The asymptotic validity
of this simplified procedure is of interest in its own right, and
has direct implications for the validity of the predictability tests
developed by \KAP{}.\footnote{These implications are fully developed in an earlier version of this
paper (arXiv:1509.05017v3).}

Following \KAP{}, an estimate of the regression function $\regfn$,
at a chosen $x\in\reals$, is provided by the local level (Nadaraya-Watson)
regression estimator,
\begin{equation}
\hat{\regfn}_{n}(x;h)\defeq\frac{\sum_{t=1}^{n}K_{h}(x_{t}-x)y_{t+1}}{\sum_{t=1}^{n}K_{h}(x_{t}-x)},\label{eq:reghat}
\end{equation}
where $K:\reals\setmap\reals$ is a smooth probability density, $h>0$
denotes the bandwidth, and $K_{h}(x)\defeq h^{-1}K(h^{-1}u)$. For
the purposes of developing the asymptotics of $\hat{m}_{n}$, we shall
suppose $h=h_{n}$, for $\{h_{n}\}$ a bandwidth sequence satisfying\setcounter{assumption}{506}
\begin{assumption}[smoothing]
\label{ass:smoothing}~
\begin{enumerate}[{label=\textnormal{\ref{ass:smoothing}\smaller[0.76]{\arabic*}},leftmargin=1.5cm}]
\item $K$ is non-negative, bounded and Lipschitz, with $\int_{\reals}\smlabs xK(x)\deriv x<\infty$
and $\int_{\reals}K=1$;
\item \emph{\label{enu:smoothing:hn}} $h_{n}=o(1)$ and $n^{1/2}h_{n}\goesto\infty$.
\end{enumerate}
\end{assumption}

\begin{rem}
The maximum rate at which $h_{n}$ may shrink to zero, while still
ensuring the consistency of $\hat{\regfn}_{n}$, will be determined
by the values of $\rho$ for which $\{x_{t}\}$ is least recurrent
\textendash{} i.e.\ when $\rho=1$. This accounts for the requirement
that $n^{1/2}h_{n}\goesto\infty$ in \enuref{smoothing:hn}. This
could be relaxed if $h_{n}$ were chosen so as to adapt to the (unknown)
recurrence of $\{x_{t}\}$.
\end{rem}

For each $x\in\reals$, a test of 
\begin{equation}
\nullhyp:\regfn(x)=\nullval\qquad\text{against}\qquad\althyp:\regfn(x)\neq\nullval\label{eq:nullandalt}
\end{equation}
may then be based on the nonparametric $t$-statistic
\begin{equation}
\hat{t}_{n}(x)\defeq s_{n}(x)^{-1}[\hat{\regfn}(x;h_{n})-\nullval],\label{eq:tstat}
\end{equation}
where 
\begin{align}
s_{n}^{2}(x) & \defeq\frac{\hat{\sigma}_{u}^{2}(x)\int_{\reals}K^{2}}{h_{n}\sum_{t=1}^{n}K_{h_{n}}(x_{t}-x)} & \hat{\sigma}_{u}^{2}(x) & \defeq\frac{\sum_{t=1}^{n}K_{h_{n}}(x_{t}-x)[y_{t+1}-\hat{\regfn}_{n}(x)]^{2}}{\sum_{t=1}^{n}K_{h_{n}}(x_{t}-x)}.\label{eq:s-sig}
\end{align}
As in \KAP{}, critical values for the test are provided by the quantiles
of a standard normal distribution: so that for a test having nominal
size $\alpha$, $\nullhyp$ would be rejected if $\smlabs{\hat{t}_{n}(x)}>z_{1-\alpha/2}$,
where $z_{\tau}$ denotes the $\tau$th quantile of the standard normal
distribution.

\subsection{Asymptotic validity of the $t$ test\label{subsec:unif}}

The purpose of this section is to show that the testing procedure
described above has the correct size asymptotically, in the sense
that the nominal and actual size of the test approximately agree in
large samples.

To that end, recall that the size of a test of is commonly defined
as its maximum rejection probability over all values of the model
parameters consistent with the null hypothesis (see e.g.\ \citealp{LR05},
p.\ 57). In the present setting, $\nullhyp$ restricts only the value
of $\regfn$ (at $x$), leaving the nuisance parameter $\rho\in\Rho$
entirely unrestricted.\footnote{We might also regard other aspects of the model, such as the distributions
of $\epsilon_{t}$ and $u_{t}$, as (infinite-dimensional) nuisance
parameters. The size of the $t$ test would then be more properly
computed by taking the maximum rejection probability over the parameter
space for these distributions (as well as over $\rho\in\Rho$). Our
results could be extended in this direction, but we have refrained
from doing so here in order to keep the paper to a reasonable length.} Thus the $t$ test for $\nullhyp$ has size $\alpha$ asymptotically
if
\begin{equation}
\limsup_{n\goesto\infty}\sup_{\rho\in\Rho}\Prob_{\rho}\{\smlabs{\hat{t}_{n}(x)}\geq z_{1-\alpha/2}\}=\alpha.\label{eq:sizealpha}
\end{equation}
where the `$\rho$' subscript on $\Prob_{\rho}$ makes explicit
the dependence of this probability of the value of $\rho$ in \eqref{regproc}.
It is known from previous work \textendash{} e.g.\ from Lemma~2
in \KAP{} \textendash{} that 
\begin{equation}
\hat{t}_{n}(x)\wkc N[0,1]\label{eq:tcvg}
\end{equation}
for every \emph{fixed} $\rho\in\Rho$, and indeed when $\rho=1+c/n$.
But while this result is highly suggestive, it is insufficient to
establish \eqref{sizealpha}.

What would be sufficient for \eqref{sizealpha}? Since there must
be a sequence $\{\rho_{n}^{\ast}\}\subset\Rho$ such that
\[
\limsup_{n\goesto\infty}\sup_{\rho\in\Rho}\Prob_{\rho}\{\smlabs{\hat{t}_{n}(x)}\geq z_{1-\alpha/2}\}=\lim_{n\goesto\infty}\Prob_{\rho_{n}^{\ast}}\{\smlabs{\hat{t}_{n}(x)}\geq z_{1-\alpha/2}\},
\]
\eqref{sizealpha} will follow once we have shown that \eqref{tcvg}
holds for the drifting sequence $\rho=\rho_{n}^{\ast}$. Rather than
try to characterise $\{\rho_{n}^{\ast}\}$ and show that \eqref{tcvg}
holds for that specific sequence, \propref{size} below establishes
that \eqref{tcvg} holds for \emph{every} drifting sequence $\{\rho_{n}\}\subset\Rho$.
This immediately implies \eqref{sizealpha}, and carries the further
implication that the $t$-test is asymptotically similar, in the sense
that
\[
\liminf_{n\goesto\infty}\inf_{\rho\in\Rho}\Prob_{m,\rho}\{\smlabs{\hat{t}_{n}(x)}\geq z_{1-\alpha/2}\}=\alpha
\]
holds additionally. (For a further discussion, see \citealp{ACG2011CFDP}.)

Our main result on the asymptotic size of the $t$ test may now be
stated. We shall additionally assume $h_{n}=o(n^{-1/3})$, so as to
ensure that the bias in $\hat{\regfn}_{n}$ is asymptotically negligible.\footnote{If \enuref{regfn} were strengthened such that the \emph{second} derivatives
of $m$ were uniformly bounded, then it would be possible to relax
this requirement to $h_{n}=o(n^{-1/6})$: see e.g.\ \citeauthor{WP09Ecta}
(\citeyear{WP09Ecta}, Rem.\ C; \citeyear{WP11ET}).}
\begin{prop}
\label{prop:size}Suppose \assref{reg} and \assref{smoothing} hold,
and that additionally $h_{n}=o(n^{-1/3})$. Then under $\nullhyp$
\begin{equation}
\hat{t}_{n}(x)\wkc N[0,1]\label{eq:tstatcvg}
\end{equation}
along every $\{\rho_{n}\}\subset\Rho$, and the nonparametric $t$
test of \eqref{nullandalt} is asymptotically similar.
\end{prop}
The proof of \propref{size} appears in \appref{unifproof}. The problem
reduces to one of proving that
\begin{equation}
\upsilon_{n}(x)\defeq\frac{h_{n}^{1/2}\sum_{t=1}^{n}K_{h_{n}}(x_{t}-x)u_{t+1}}{\sigma_{u}\left[\sum_{t=1}^{n}K_{h_{n}}(x_{t}-x)\int K^{2}\right]^{1/2}}\wkc N[0,1]\label{eq:gausslim}
\end{equation}
when $\rho=\rho_{n}$, for all drifting sequences $\{\rho_{n}\}\subset\Rho$.
Define the following classes of sequences:\label{subsec:division}
\begin{itemize}
\item \emph{stationary}: $\rho_{n}\goesto\rho$ for some $\rho\in[-1+\delta,1)$,
and $\rho_{n}<1$ for all $n$;
\item \emph{mildly integrated}: $\rho_{n}\goesto1$ but $n(\rho_{n}-1)\goesto-\infty$,
and $\rho_{n}<1$ for all $n$; and
\item \emph{local to unity}: $\rho_{n}\goesto1$, and $n(\rho_{n}-1)\goesto c$
for some $c\leq0$;
\end{itemize}
and let $\rseqs$ denote the collection of all such sequences $\{\rho_{n}\}$.
Though $\rseqs$ is evidently a strict subset of all sequences in
$\Rho$, by an argument given in the proof of \propref{size}, the
convergence \eqref{gausslim} must hold for \emph{all} sequences in
$\Rho$ if it holds for all those in $\rseqs$ (here we adapt the
proof of Lemma~2.1 in \citealp{AC12Ecta}).

It then remains to prove that \eqref{gausslim} holds for stationary,
mildly integrated, and local-to-unity sequences $\{\rho_{n}\}$. In
all cases, the numerator of \eqref{gausslim} is a martingale, and
so is in principle amenable to the application of existing martingale
central limit theory. The main difficulty is to show that the conditional
variance $\sigma_{u}^{2}\sum_{t=1}^{n}K_{h_{n}}^{2}(x_{t}-x)$ converges
weakly to an a.s.\ nonzero limit upon standardisation. This follows
by an application of \thmref{fidi} below. Convergence results of
this kind are available in the literature when $\{\rho_{n}\}$ is
stationary or local to unity, but the proof of this convergence when
$\{\rho_{n}\}$ is mildly integrated requires some genuinely new limit
theory for kernel density estimators, which is the principal contribution
of the following section.

\section{Density estimation: a unified limit theory\label{sec:spatialdensity}}

Our remaining objective is thus to provide some new results on the
asymptotics of functionals of the form $\sum_{t=1}^{n}f_{h_{n}}(x_{t}-x)$
\textendash{} where $f$ is an integrable function and $f_{h}(x)\defeq h^{-1}f(h^{-1}x)$
\textendash{} in the case where $\{x_{t}\}$ is mildly integrated,
i.e.\ when $\rho_{n}\goesto1$ but $n(\rho_{n}-1)\goesto-\infty$.
We shall do this by means of an extension to Theorem~2.1 in \citet[hereafter WP]{WP09ET},
which is stated as \thmref{WPgen} below. An application of this result
to mildly integrated processes, in conjunction with existing results
for local-to-unity and stationary processes, gives the asymptotics
of $\sum_{t=1}^{n}f_{h_{n}}(x_{t}-x)$ for all three classes of processes
considered in the preceding section, which are collected in \thmref{fidi}
below.

\subsection{A general framework\label{subsec:densprelims}}

In order to provide our extension of Theorem~2.1 in \citet[hereafter WP]{WP09ET},
we first restate their assumptions, some of which will also be needed
here. Let $\{\tilde{x}_{n,t}\}_{t=1}^{n}$ be a triangular array,
$\{\tilde{\filt}_{n,t}\}_{t=1}^{n}$ a collection of $\sigma$-fields
such that each $\tilde{x}_{n,t}$ is $\tilde{\filt}_{n,t}$-measurable,
$f:\reals\setmap\reals$, and define 
\[
\Omega_{n}(\eta)\defeq\{(s,t)\mid\eta n\leq s\leq(1-\eta)n\sep s+\eta n\leq t\leq n\}
\]
for $\eta\in(0,1)$. Let $L^{p}$ denote the class of Lebesgue $p$-integrable
functions on $\reals$.

\setcounter{assumption}{613}
\begin{assumption}[Ass.\ 2.1\textendash 2.3 in \citealp{WP09ET}]
~\label{ass:WP}

\begin{enumerate}[{label=\textnormal{\ref{ass:WP}\smaller[0.76]{\arabic*}},leftmargin=1.5cm}]
\item \label{enu:WP:g}$f\in L^{1}\intsect L^{2}$.
\item \label{enu:WP:wkc}There exists a stochastic process $X(r)$ on $[0,1]$
having continuous local time $\loctime_{X}(r,a)$ such that $\tilde{x}_{n,\smlfloor{nr}}\wkc X(r)$
in $\ell_{\infty}([0,1])$.
\item \label{enu:WP:array}There exists an $n_{0}\in\naturals$ such that
for all $0\leq s<t\leq n$ and $n\geq n_{0}$,\footnote{Note that \WP{} have $n_{0}=1$ in their statement of this condition,
but it is clearly sufficient for their result that this condition
hold only for $n$ sufficiently large.} there are constants $\{d_{n,s,t}\}$ such that

\begin{enumerate}
\item for some $m_{0}>0$ and $C>0$, $\inf_{(s,t)\in\Omega_{n}(\eta)}d_{n,s,t}\geq\eta^{m_{0}}/C$
as $n\goesto\infty$, and

\begin{enumerate}
\item $\lim_{\eta\goesto0}\lim_{n\goesto\infty}\frac{1}{n}\sum_{t=(1-\eta)n}^{n}d_{n,0,t}^{-1}=0$,
\item $\lim_{\eta\goesto0}\lim_{n\goesto\infty}\frac{1}{n}\max_{0\leq s\leq(1-\eta)n}\sum_{t=s+1}^{s+\eta n}d_{n,s,t}^{-1}=0$,
\item $\limsup_{n\goesto\infty}\frac{1}{n}\max_{0\leq s\leq n-1}\sum_{t=s+1}^{n}d_{n,s,t}^{-1}<\infty;$
\end{enumerate}
\item conditional on $\tilde{\filt}_{n,s}$, $(\tilde{x}_{n,t}-\tilde{x}_{n,s})/d_{n,s,t}$
has a density $h_{n,s,t}(x)$ which is uniformly bounded (in $n$,
$s$ and $t$) by a constant $K<\infty$, and
\begin{equation}
\lim_{\delta\goesto0}\lim_{n\goesto\infty}\sup_{(s,t)\in\Omega_{n}(\delta^{1/2m_{0}})}\sup_{\smlabs u\leq\delta}\smlabs{h_{n,s,t}(u)-h_{n,s,t}(0)}=0.\label{eq:hequic}
\end{equation}
\end{enumerate}
\end{enumerate}
\end{assumption}

It is evident from \citet{Jeg04} that \enuref{WP:wkc} may be weakened
to finite dimensional convergence (i.e.\ $\tilde{x}_{n,\smlfloor{nr}}\fdd X(r)$)
if $\{\tilde{x}_{n,\smlfloor{nr}}\}$ satisfies the following weak
asymptotic `equicontinuity in probability' condition: that for every
$\epsilon>0$,
\begin{equation}
\lim_{\delta\goesto0}\limsup_{n\goesto\infty}\sup_{\smlabs{r_{1}-r_{2}}\leq\delta}\Prob\{\smlabs{\tilde{x}_{n,\smlfloor{nr_{1}}}-\tilde{x}_{n,\smlfloor{nr_{2}}}}>\epsilon\}=0.\label{eq:ucprob}
\end{equation}
This is considerably weaker than asymptotic equicontinuity (tightness),
which would require control over $\sup_{\smlabs{r_{1}-r_{2}}\leq\delta}\smlabs{\tilde{x}_{n,\smlfloor{nr_{1}}}-\tilde{x}_{n,\smlfloor{nr_{2}}}}$
(and which is of course implied by \enuref{WP:wkc}). However, as
discussed further in \remref{fidis} below, when $\{\tilde{x}_{n,t}\}$
is derived from a mildly integrated process, even such an apparently
weak requirement as \eqref{ucprob} fails to hold: though the finite-dimensional
limit of $\tilde{x}_{n,[nr]}$ exists, it is not separable. However,
it is possible in this case to verify the following strictly weaker
condition, which turns out to be sufficient for the purposes of \thmref{WPgen}
below.

\addtocounter{assumption}{-1}
\begin{assumption}[continued]
~

\begin{enumerate}[{start=2, label=\textnormal{\ref{ass:WP}\smaller[0.76]{\arabic*$^\prime$}},leftmargin=1.5cm}]
\item \label{enu:WPdens} There exists a stochastic process $\tilde{\genloc}:[0,1]\times\reals\setmap\reals_{+}$,
which is continuous a.s.\ with $\int_{\reals}\tilde{\genloc}(r,x)\diff x<\infty$
for all $r\in[0,1]$, such that for every bounded and Lipschitz $g:\reals\setmap\reals$,
\begin{equation}
\frac{1}{n}\sum_{t=1}^{\smlfloor{nr}}g(\tilde{x}_{n,t}-a)\fdd\int_{\reals}g(x-a)\tilde{\genloc}(r,x)\diff x,\label{eq:genCMT}
\end{equation}
over $(r,a)\in[0,1]\times\reals$.
\end{enumerate}
\end{assumption}
Replacing \enuref{WP:wkc} by \enuref{WPdens}, we thus have the following
extension of \WP{}'s Theorem~2.1. The proof appears in \appref{fidiproof}.
\begin{thm}
\label{thm:WPgen}Suppose \enuref{WP:g}, \enuref{WPdens} and \enuref{WP:array}
hold. Then if $\tilde{c}_{n}\goesto\infty$ and $\tilde{c}_{n}/n\goesto0$
\begin{equation}
\frac{\tilde{c}_{n}}{n}\sum_{t=1}^{\smlfloor{nr}}f[\tilde{c}_{n}(\tilde{x}_{n,t}-a)]\fdd\tilde{\genloc}(r,a)\int_{\reals}f\label{eq:loccvg}
\end{equation}
over $(r,a)\in[0,1]\times\reals$.
\end{thm}

\subsection{Application to mildly integrated processes}

\thmref{WPgen} is broad enough to cover the entire class of regressor
processes contemplated in \assref{reg}, even when $\rho=\rho_{n}$
varies with $n$. Indeed, it is the manner in which $\rho_{n}$ approaches
unity (if at all) that determines the density $\tilde{\genloc}$ appearing
in \eqref{genCMT}. In accordance with the division of the sequences
$\{\rho_{n}\}\in\rseqs$ given in \subsecref{division} above, define
\begin{equation}
\genloc(r,a;\{\rho_{n}\})\defeq\begin{cases}
r\sigma_{\rho}\limdens_{\rho}(\sigma_{\rho}a) & \text{if }\{\rho_{n}\}\text{ is stationary}\\
r\gauss(a) & \text{if }\{\rho_{n}\}\text{ is mildly integrated}\\
\loctime_{c}(r,a) & \text{if }\{\rho_{n}\}\text{ is local to unity}
\end{cases}\label{eq:trichotomy}
\end{equation}
where $\limdens_{\rho}$ is the density corresponding to the stationary
solution to \eqref{regproc}, which has variance $\sigma_{\rho}^{2}$;
$\gauss$ is the standard normal density; and $\loctime_{c}(r,a)$
is the local time density (at time $r\in[0,1]$ and point $a\in\reals$)
associated with the normalised Ornstein\textendash Uhlenbeck process,
\begin{equation}
J_{c}(r)\defeq\left(\int_{0}^{1}\e^{2(1-s)c}\diff s\right)^{-1/2}\int_{0}^{r}\e^{(r-s)c}\diff W(s),\label{eq:OU}
\end{equation}
for $W$ a standard Brownian motion on $[0,1]$.

Our main result on the finite-dimensional convergence of density estimators,
when applied to a series $\{x_{t}\}$ satisfying \assref{reg}, may
be stated as follows. Let $\{h_{n}\}$ denote a deterministic, nonzero
bandwidth sequence, define $\stdseq_{n}\defeq\var(x_{n})^{1/2}$,
and recall $f_{h}(x)\defeq h^{-1}f(h^{-1}x)$. The proof appears in
\appref{fidiproof}.\footnote{In an earlier version of this paper (arXiv:1509.05017v3) we showed
that the finite dimensional convergence in \thmref{fidi} may be strengthened
to weak convergence.}
\begin{thm}
\label{thm:fidi}Suppose \assref{reg} holds with $\rho=\rho_{n}$
for some $\{\rho_{n}\}\in\rseqs$, and $f\in L^{1}\intsect L^{2}$.
Then if $h_{n}=o(\stdseq_{n})$ and $n\stdseq_{n}^{-1}h_{n}\goesto\infty$,
\begin{equation}
\frac{\stdseq_{n}}{n}\sum_{t=1}^{\smlfloor{nr}}f_{h_{n}}(x_{t}-\stdseq_{n}a)\fdd\genloc(r,a;\{\rho_{n}\})\int_{\reals}f,\label{eq:fidi}
\end{equation}
over $(r,a)\in[0,1]\times\reals$.
\end{thm}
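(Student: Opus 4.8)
The plan is to derive \eqref{fidi} from \propref{WPgen}, applied to the scale-normalised array $\tilde{x}_{n,t}=\stdseq_n^{-1}x_t$ of \eqref{tildearray} with norming sequence $c_n\defeq\stdseq_n/h_n$ and limiting spatial density $\tilde{\genloc}=\genloc(\cdot,\cdot;\{\rho_n\})$ as in \eqref{trichotomy}. First I would record the algebraic identity underpinning this reduction: writing $f_{h_n}(x)=h_n^{-1}f(h_n^{-1}x)$ and $x_t-\stdseq_n a=\stdseq_n(\tilde{x}_{n,t}-a)$ gives
\begin{equation}
\avgfn_n(r,a;f,h_n)=\frac{\stdseq_n}{n}\sum_{t=1}^{\smlfloor{nr}}f_{h_n}(x_t-\stdseq_n a)=\frac{c_n}{n}\sum_{t=1}^{\smlfloor{nr}}f[c_n(\tilde{x}_{n,t}-a)],
\end{equation}
which is precisely the left side of \eqref{loccvg}. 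The bandwidth hypotheses translate directly into the requirements on $c_n$ in \propref{WPgen}: $h_n=o(\stdseq_n)$ yields $c_n\goesto\infty$, and $n\stdseq_n^{-1}h_n\goesto\infty$ yields $c_n/n\goesto0$. It therefore suffices to verify \enuref{WP:g}, \enuref{WPdens} and \enuref{WP:array} for $\{\tilde{x}_{n,t}\}$.

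Assumption \enuref{WP:g} is immediate, being the standing hypothesis $f\in L^1\intsect L^2$. For \enuref{WP:array} I would exploit the autoregressive structure $x_t=\rho_n^{t-s}x_s+\sum_{j=s+1}^t\rho_n^{t-j}v_j$: conditionally on $\tilde{\filt}_{n,s}$, the increment $\tilde{x}_{n,t}-\tilde{x}_{n,s}$ has standard deviation of order $\stdseq_n^{-1}(\sum_{i=0}^{t-s-1}\rho_n^{2i})^{1/2}$, which suggests taking $d_{n,s,t}\asymp(1-\rho_n^{2(t-s)})^{1/2}$ (a quantity lying in $(0,1]$, since $\rho_n\leq1$). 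The short-memory condition $\sum_k\smlabs{\phi_k}<\infty$ with $\phi\neq0$ controls the slippage caused by the moving-average structure, and the three summability conditions then reduce to elementary geometric-sum estimates that hold uniformly across the three regimes. The uniform boundedness and near-continuity at the origin of the conditional density $h_{n,s,t}$ follow from $\psi_\err\in L^1$ and $f_\err\in\Lip$, via the convolution representation of the density of $\sum_{j=s+1}^t\rho_n^{t-j}v_j$.

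The substance of the argument -- and the content of \appref{fidiproof} -- is the verification of \enuref{WPdens}: for every $g\in\BL$,
\begin{equation}
\frac{1}{n}\sum_{t=1}^{\smlfloor{nr}}g(\tilde{x}_{n,t}-a)\fdd\int_{\reals}g(x-a)\,\genloc(r,x;\{\rho_n\})\diff x
\end{equation}
over $(r,a)\in[0,1]\times\reals$. This must be established separately in each regime, since the target in \eqref{trichotomy} differs fundamentally across them. When $\{\rho_n\}\in\st[\rho]$ the limit $r\limdens_\rho(a)$ is deterministic, and convergence follows from an $L^2$ law of large numbers for the asymptotically stationary, ergodic sequence $\{\tilde{x}_{n,t}\}$, the transient induced by $x_0=0$ being negligible as it decays geometrically at a rate bounded away from unity. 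When $\{\rho_n\}\in\lu[c]$ the limit is the random local time $\loctime_c(r,a)$ of the process \eqref{OU}, and the convergence is classical, following from $\tilde{x}_{n,\smlfloor{nr}}\wkc J_c(r)$ together with the continuous-local-time arguments of \WP{}; here \enuref{WPdens} is in fact a consequence of the original \enuref{WP:wkc}.

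The main obstacle is the mildly integrated regime $\{\rho_n\}\in\mi$, where $\rho_n\goesto1$ but $n(1-\rho_n)\goesto\infty$. Here \enuref{WP:wkc} is genuinely unavailable: as discussed in \remref{fidis}, for fixed $0<r_1<r_2$ the correlation of $\tilde{x}_{n,\smlfloor{nr_1}}$ and $\tilde{x}_{n,\smlfloor{nr_2}}$ is of order $\rho_n^{\smlfloor{n(r_2-r_1)}}\asymp\e^{-n(r_2-r_1)(1-\rho_n)}\goesto0$, so the finite-dimensional limit of $\tilde{x}_{n,\smlfloor{nr}}$ is a field of independent standard Gaussians, which is not separable and admits no local time. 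I would instead establish \enuref{WPdens} by a direct mean--variance computation. For the mean, a central limit theorem for the array linear process gives $\tilde{x}_{n,t}\indist N[0,1]$ for all $t$ outside a burn-in of length $O((1-\rho_n)^{-1})=o(n)$, whence $\frac{1}{n}\sum_{t=1}^{\smlfloor{nr}}\expect[g(\tilde{x}_{n,t}-a)]\goesto r\int_{\reals}g(x-a)\gauss(x)\diff x$. For the variance, the decorrelation $\smlabs{\cov(g(\tilde{x}_{n,s}-a),g(\tilde{x}_{n,t}-a))}\lesssim\rho_n^{\smlabs{t-s}}$ -- rigorously justified through the bounded conditional densities supplied by \enuref{WP:array} -- yields
\begin{equation}
\var\left(\frac{1}{n}\sum_{t=1}^{\smlfloor{nr}}g(\tilde{x}_{n,t}-a)\right)\lesssim\frac{1}{n^2}\sum_{s,t=1}^{n}\rho_n^{\smlabs{t-s}}\lesssim\frac{1}{n(1-\rho_n)}\goesto0.
\end{equation}
Hence the sum converges in $L^2$ to the deterministic quantity $\int_{\reals}g(x-a)\,r\gauss(x)\diff x$, matching \eqref{trichotomy}; and because this limit is non-random, pointwise convergence in probability upgrades automatically to the joint finite-dimensional convergence demanded by \enuref{WPdens}. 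It is precisely this decorrelation-based variance bound, replacing the local-time machinery that fails for $\{\rho_n\}\in\mi$, that constitutes the crux of the proof.
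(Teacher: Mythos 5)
Your architecture is the paper's: reduce \eqref{fidi} to \propref{WPgen} applied to $\tilde{x}_{n,t}=\stdseq_{n}^{-1}x_{t}$ with $c_{n}=\stdseq_{n}/h_{n}$, and locate the real work in verifying \enuref{WPdens} and \enuref{WP:array} under $\mi$, where \enuref{WP:wkc} fails because the finite-dimensional limit is Gaussian white noise; your mean computation is exactly \propref{scaledLLN}\enuref{LLNp2}. Three points of execution differ, two of which need repair. First, for the concentration step the paper does not use covariance decay but the martingale-projection bound of \lemref{Ebound}, which gives $\expect\smlabs{n^{-1}\sum_{t}[g(\tilde{x}_{n,t})-\expect g(\tilde{x}_{n,t})]}\lesssim[n(1-\rho_{n})]^{-1/2}$ in one stroke. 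Your bound $\smlabs{\cov(g(\tilde{x}_{n,s}),g(\tilde{x}_{n,t}))}\lesssim\rho_{n}^{\smlabs{t-s}}$ is correct only for a pure AR(1): under \enuref{regproc} the innovations $v_{j}$ for $j>s$ still load on $\{\err_{r}\}_{r\leq s}$, which adds a non-geometric term of order $\tilde{\phi}_{\smlabs{t-s}}\defeq\sum_{k\geq\smlabs{t-s}}\smlabs{\phi_{k}}$ to the covariance. The variance still vanishes, since $n^{-1}\sum_{j\leq n}\tilde{\phi}_{j}^{1/2}\goesto0$ by Ces\`aro, but the geometric bound as stated is false under $\sum_{k}\smlabs{\phi_{k}}<\infty$ alone. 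Second, your $d_{n,s,t}\asymp(1-\rho_{n}^{2(t-s)})^{1/2}$ equals zero at $\rho_{n}=1$ and has the wrong order throughout $\lu$ (there one needs $d_{n,s,t}\asymp((t-s)/n)^{1/2}$); the paper instead takes $d_{n,s,t}=1$ under $\mi$, which trivialises part~(a) of \enuref{WP:array}, and proves the density bound in part~(b) via uniform integrability of the characteristic function of $\stdseq_{n}^{-1}x_{1,t-s,t-s}$ (\propref{WP3}), while the $\lu$ case is simply cited from \citet{WP09Ecta}. Third, the paper does not route the stationary case through \propref{WPgen} at all: it centres $\avgfn_{n}$ at its expectation via Theorem~1 of \citet{WHH10SP} and computes the limit of the expectation by Fourier inversion; your ergodic-LLN route could be made to work but would oblige you to verify \enuref{WP:array} for $\rho_{n}\goesto\rho<1$, which the paper deliberately avoids. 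None of this changes the overall strategy, but the first two items are gaps in the written argument rather than mere stylistic differences.
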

\begin{rem}
\label{rem:cvgtozero} $\stdseq_{n}\goesto\infty$ whenever $\{\rho_{n}\}$
is mildly integrated or local to unity, and so the arguments given
in the proof of \thmref{fidi} also imply that, in these cases,
\[
\frac{\stdseq_{n}}{n}\sum_{t=1}^{\smlfloor{nr}}f_{h_{n}}(x_{t}-x)\wkc\genloc(r,0;\{\rho_{n}\})\int_{\reals}f
\]
for each $x\in\reals$.
\end{rem}

\begin{rem}
The stationary and local-to-unity cases are covered by the results
of \citet{WM02AS}, \citet{WP09Ecta} and \citet{WHH10SP}. The proof
under mild integration is new to the literature, though the arguments
employed are a combination of those appropriate to the stationary
and local-to-unity cases. Our strategy is to use a kind of law of
large numbers to establish \eqref{genCMT} for the scale-normalised
array
\begin{equation}
\tilde{x}_{n,t}\defeq\var(x_{n})^{-1/2}x_{t}=\stdseq_{n}^{-1}x_{t},\label{eq:tildearray}
\end{equation}
(see \propref{scaledLLN} in \appref{fidiproof}), whence it follows
that $\{\tilde{x}_{n,t}\}$ satisfies \enuref{WPdens}. Since \enuref{WP:g}
and \enuref{WP:array} also hold, it is then possible to invoke \thmref{WPgen}.
\end{rem}

\begin{rem}
\label{rem:fidis} The tripartite classification in \eqref{trichotomy}
is reflected in the different possible finite-dimensional limits $X(r;\{\rho_{n}\})$
of the standardised regressor process $X_{n}(r)\defeq\stdseq_{n}^{-1}x_{\smlfloor{nr}}$.
Under both stationarity and mild integration, the relatively weak
dependence between $X_{n}(r_{1})$ and $X_{n}(r_{2})$ vanishes in
the limit, and so $X$ has the property that $X(r_{1})$ and $X(r_{2})$
are independent for every $r_{1}\neq r_{2}$. This explains why even
such an apparently mild equicontinuity requirement as \eqref{ucprob}
is unavailing for the purposes of proving \thmref{fidi}.

Under mild integration, $\stdseq_{n}\goesto\infty$ and an invariance
principle operates to ensure that the marginals of $X(r)$ are standard
normal; whereas in the stationary case, $\stdseq_{n}$ is bounded
and the marginals have density $\limdens_{\rho}$, which depends on
the distribution of $\{\err_{t}\}$. The limiting process $X$ under
mild integration thus corresponds to a continuous-time, standard normal
white noise process. (A rigorous basis for these assertions is provided
by \propref{scaledLLN}\enuref{LLNp2} in \appref{fidiproof}, and
the proof thereof.)

The strong dependence between $X_{n}(r_{1})$ and $X_{n}(r_{2})$
that is a characteristic of local-to-unity processes ensures that,
in this case, $X_{n}$ converges weakly to the diffusion $J_{c}$
(see \eqref{OU} above). As $c\goesto-\infty$, the finite-dimensional
distributions of $J_{c}$ converge to those of standard normal white
noise process: and in this sense there is continuity, in the limit,
at the boundary demarcating mildly integrated and local-to-unity processes.
\end{rem}

\section{Conclusion}

This paper has established the asymptotic size of the nonparametric
$t$ test in a predictive regression, when the regressor is possibly
highly persistent. Our work on this problem has necessitated the development
of some new limit theory for kernel density estimators, when these
are applied to mildly integrated processes. These new results have
allowed us to give a unified treatment of kernel density and regression
estimators that encompasses stationary, mildly integrated and local-to-unity
processes.

A notable implication of our results is that conventional nonparametric
inferences, using normal critical values, remain valid regardless
of the degree of persistence of the regressor. This may be counted
a significant advantage of kernel nonparametric estimators over their
parametric counterparts, which partially compensates for their lower
rates of convergence and \textendash{} in the case of integrated regressors
\textendash{} their limited applicability to models with multiple
regressors.

\section{References}

{\vspace{-10pt}\singlespace\bibliographystyle{ecta}
\bibliography{time-series,asymptotics}
}

\appendix

\section{Proof of \propref{size}\label{app:unifproof}}

Throughout the Appendices (excepting Section\ \ref{app:WPproof}),
Assumptions~\assref{reg} and \assref{smoothing} are always maintained,
even when not explicitly referenced.
\begin{notation*}
For $p\in(1,\infty)$ and a function $f:\reals\setmap\reals$, define
$\smlnorm f_{p}\defeq(\int\smlabs{f(x)}^{p}\diff x)^{1/p}$ and $\smlnorm f_{\infty}\defeq\sup_{x\in\reals}\smlabs{f(x)}$;
for a random variable $X$, $\smlnorm X_{p}\defeq(\expect\smlabs X^{p})^{1/p}$,
and $\smlnorm X_{\infty}$ denotes the essential supremum of $X$.
$C$, $C_{1}$, etc., denote generic constants which may take on different
values even at different places in the same proof.
\end{notation*}

We shall need the following auxiliary results, the proofs of which
appear in Section~S.1 of the Online Supplement. Recall the definition
of $\rseqs$, and the classification of the sequences $\{\rho_{n}\}\in\rseqs$
given in \subsecref{unif}. Let 
\[
\scseq_{n}\defeq\scseq_{n}(\{\rho_{n}\})\defeq n\stdseq_{n}^{-1}
\]
where $\stdseq_{n}\defeq\var(x_{n})^{1/2}$ as was defined in \eqref{tildearray}.
\begin{lem}
\label{lem:std}~Suppose $\{\rho_{n}\}\in\rseqs$. Then
\[
n^{1/2}\lesssim\nseq_{n}(\{\rho_{n}\})\lesssim n
\]
\end{lem}
The next lemma is a direct consequence of \thmref{fidi}, and is the
principal implication of that theorem needed for the proof \propref{size}.
For $\{\rho_{n}\}\in\rseqs$, define
\[
\tau(x)\defeq\tau(x,\{\rho_{n}\})\defeq\begin{cases}
\sigma_{\rho}\limdens_{\rho}(x) & \text{if }\{\rho_{n}\}\text{ is stationary}\\
\gauss(0) & \text{if }\{\rho_{n}\}\text{ is mildly integrated}\\
\loctime_{c}(1,0) & \text{if }\{\rho_{n}\}\text{ is local to unity},
\end{cases}
\]
where $\nu_{\rho}$ denotes the density of the stationary solution
to \eqref{regproc} (for $\rho<1$), and $\sigma_{\rho}^{2}$ its
variance.
\begin{lem}
\label{lem:cvg} Suppose $\{\rho_{n}\}\in\rseqs$. Then if $\alpha\geq1$
and $\beta=0$, or $\alpha=1$ and $\beta\in[0,1]$,
\[
\frac{1}{e_{n}}\sum_{t=1}^{n}\frac{1}{h_{n}}K^{\alpha}\left(\frac{x_{t}-x}{h_{n}}\right)\abs{\frac{x_{t}-x}{h_{n}}}^{\beta}\wkc\tau(x)\int_{\reals}K^{\alpha}(u)\smlabs u^{\beta}\deriv u,
\]
where $\tau(x)>0$ a.s.
\end{lem}
\begin{lem}
\label{lem:sig2}For every $x\in\reals$, $\hat{\sigma}_{u}^{2}(x)=\sigma_{u}^{2}+o_{p}(1)$.
\end{lem}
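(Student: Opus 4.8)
The plan is to substitute the model $y_{t+1}=\regfn(x_t)+u_{t+1}$ into \eqref{s-sig} and expand the square. Writing $b_t\defeq\regfn(x_t)-\regfn(x)$, $B\defeq\hat{\regfn}_n(x)-\regfn(x)$ and $S_n\defeq\sum_{t=1}^n K_{h_n}(x_t-x)$, the identity $y_{t+1}-\hat{\regfn}_n(x)=u_{t+1}+b_t-B$, together with $S_n^{-1}\sum_t K_{h_n}(x_t-x)=1$, yields the decomposition $\hat{\sigma}_u^2(x)=A_1+A_2+A_3$, where
\[
A_1\defeq\frac1{S_n}\sum_{t=1}^n K_{h_n}(x_t-x)u_{t+1}^2,\qquad A_2\defeq\frac1{S_n}\sum_{t=1}^n K_{h_n}(x_t-x)(b_t-B)^2,
\]
and $A_3\defeq\tfrac2{S_n}\sum_{t=1}^n K_{h_n}(x_t-x)u_{t+1}(b_t-B)$. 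Since $K\geq0$, the Cauchy--Schwarz inequality gives $\abs{A_3}\leq2A_1^{1/2}A_2^{1/2}$, so it suffices to establish $A_1\inprob\sigma_u^2$ and $A_2\inprob0$ along any $\{\rho_n\}\in\rseqs$, as elsewhere in the appendix.

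For the leading term I would exploit the martingale structure afforded by \enuref{regdist}. As $x_t$ is $\filtg_t$-measurable while $\expect[u_{t+1}^2\mid\filtg_t]=\sigma_u^2$, the centred summands $\xi_{t+1}\defeq K_{h_n}(x_t-x)(u_{t+1}^2-\sigma_u^2)$ form a $\filtg_{t+1}$-martingale difference sequence, so that $A_1=\sigma_u^2+M_n/S_n$ with $M_n\defeq\sum_{t=1}^n\xi_{t+1}$. Orthogonality of the increments together with the conditional fourth-moment bound $\sup_t\expect[u_t^4\mid\filtg_{t-1}]\leq C$ of \enuref{regdist} gives $\expect M_n^2=\sum_t\expect\xi_{t+1}^2\leq C\sum_t\expect K_{h_n}^2(x_t-x)$, and applying \lemref{1stmomsum} to $f=K_{h_n}^2(\cdot-x)$ (for which $\norm f_1=h_n^{-1}\int_{\reals}K^2$) yields $\expect M_n^2\lesssim\scseq_n h_n^{-1}$, whence $M_n\isbigop(\scseq_n h_n^{-1})^{1/2}$. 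Meanwhile \thmref{fidi} (in the fixed-$x$ form noted in the remark following it, supplemented in the stationary case by standard kernel-density results) shows that $\scseq_n^{-1}S_n$ converges weakly to the spatial density $\genloc(0;\{\rho_n\})$, which is a.s.\ positive --- by \remref{strictpos} in the stationary region and by positivity of the local time in the local-to-unity region --- so $S_n\asymprob\scseq_n$. Combining,
\[
\frac{M_n}{S_n}\isbigop\frac{(\scseq_n h_n^{-1})^{1/2}}{\scseq_n}=(\scseq_n h_n)^{-1/2}\inprob0,
\]
the last step because $\scseq_n h_n=n\stdseq_n^{-1}h_n\goesto\infty$; hence $A_1\inprob\sigma_u^2$.

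For $A_2$, the bound $(b_t-B)^2\leq2b_t^2+2B^2$ and $S_n^{-1}\sum_t K_{h_n}(x_t-x)=1$ give $A_2\leq 2S_n^{-1}\sum_t K_{h_n}(x_t-x)b_t^2+2B^2$. By \enuref{regfn} the function $\regfn$ is Lipschitz with constant $M$, so $\abs{b_t}\leq M\abs{x_t-x}$; and since $K$ is compactly supported (\assref{smoothing}), $K_{h_n}(x_t-x)$ vanishes unless $\abs{x_t-x}\lesssim h_n$, whence $S_n^{-1}\sum_t K_{h_n}(x_t-x)b_t^2\lesssim h_n^2=o(1)$. The term $B=\hat{\regfn}_n(x)-\regfn(x)$ is itself $o_p(1)$: its bias component is $\lesssim h_n=o(1)$ by the same Lipschitz/compact-support estimate, while its stochastic component $S_n^{-1}\sum_t K_{h_n}(x_t-x)u_{t+1}$ is $\isbigop(\scseq_n h_n)^{-1/2}=o_p(1)$ by exactly the martingale argument used for $M_n/S_n$ (this being the martingale underlying \eqref{gausslim}). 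Thus $A_2\inprob0$, and $A_3\inprob0$ follows from the Cauchy--Schwarz bound above, completing the proof.

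The only point requiring genuine care, beyond routine kernel-estimator manipulations, is the control of $M_n/S_n$ in the leading term, because the denominator $S_n$ diverges at the regime-dependent rate $\scseq_n$ rather than at rate $n$. The argument rests on the two facts supplied by the earlier theory --- that $\scseq_n^{-1}S_n$ has an a.s.-positive weak limit, and that the martingale standard deviation $(\scseq_n h_n^{-1})^{1/2}$ is of smaller order than $S_n\asymp\scseq_n$ --- the latter reducing to $\scseq_n h_n\goesto\infty$. This condition is tight precisely in the local-to-unity regime, where $\scseq_n$ attains its slowest rate $\scseq_n\asymp n^{1/2}$ (see \remref{std}) and is matched against the bandwidth restriction $h_n^{-1}=o(n^{1/2})$ of \assref{smoothing}\enuref{smoothing:hn}; this is the one place where the full breadth of $\rseqs$, rather than a single stationary or local-to-unity limit, genuinely enters.
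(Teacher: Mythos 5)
Your proposal is correct and follows essentially the same route as the paper: the same three-term decomposition of $\hat{\sigma}_u^2(x)$, a martingale second-moment bound (via \lemref{1stmomsum}) for the leading $u_{t+1}^2$ term, a Lipschitz/bias estimate plus consistency of $\hat{\regfn}_n(x)$ for the squared-bias term, Cauchy--Schwarz for the cross term, and positivity of the limiting spatial density from \thmref{fidi} to control the denominator. The only differences (normalising by $S_n$ at the outset, and using compact support of $K$ rather than \lemref{1stmomsum} applied to $K(u)u^2$ for the bias bound) are cosmetic.
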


\begin{proof}[Proof of \propref{size}]
 Suppose that we:
\begin{enumerate}
\item show that \eqref{tstatcvg} holds for every $\{\rho_{n}\}\in\rseqs$;
and then
\item deduce from (i) that \eqref{tstatcvg} holds for \emph{all} $\{\rho_{n}\}\subset\Rho$.
\end{enumerate}
The proofs of (i) and (ii) are given immediately below. Now by definition
of the limit supremum, there must exist a $\{\rho_{n}^{\ast}\}\subset\Rho$
such that
\[
\limsup_{n\goesto\infty}\sup_{\rho\in\Rho}\Prob_{\rho}\{\smlabs{\hat{t}_{n}(x)}\geq z_{1-\alpha/2}\}=\lim_{n\goesto\infty}\Prob_{\rho_{n}^{\ast}}\{\smlabs{\hat{t}_{n}(x)}\geq z_{1-\alpha/2}\}.
\]
It follows from (ii) that
\begin{align*}
\lim_{n\goesto\infty}\Prob_{\rho_{n}^{\ast}}\{\smlabs{\hat{t}_{n}(x)}\geq z_{1-\alpha/2}\}=\Prob\{\smlabs{N[0,1]}\geq z_{1-\alpha/2}\}=\alpha
\end{align*}
whence the $t$ test has asymptotic size $\alpha$. Asymptotic similarity
of the $t$ test follows by an analogous argument.

\subparagraph*{(i)}

Let $x\in\reals$ and $\{\rho_{n}\}\in\rseqs$. In view of \lemref{sig2},
straightforward calculations yield that under $\nullhyp$
\begin{equation}
\hat{t}_{n}(x)=[\upsilon_{n}(x)+b_{n}(x)](1+o_{p}(1))\label{eq:tdecomp}
\end{equation}
where
\[
\upsilon_{n}(x)=\frac{h_{n}^{1/2}\sum_{t=1}^{n}K_{h_{n}}(x_{t}-x)u_{t+1}}{\sigma_{u}\left[\sum_{t=1}^{n}K_{h_{n}}(x_{t}-x)\int K^{2}\right]^{1/2}}
\]
is as defined in \eqref{gausslim}, and 
\begin{equation}
b_{n}(x)\defeq\frac{h_{n}^{1/2}\sum_{t=1}^{n}K_{h_{n}}(x_{t}-x)[\regfn(x_{t})-\regfn(x)]}{\sigma_{u}\left(\int_{\reals}K^{2}\sum_{t=1}^{n}K_{h_{n}}(x_{t}-x)\right)^{1/2}}\eqdef\frac{b_{n,1}(x)}{b_{n,2}(x)}.\label{eq:bn}
\end{equation}

By \lemref{cvg} and the fact that $\smlabs{\regfn(x_{t})-\regfn(x)}\leq C\smlabs{x_{t}-x}$
(by \enuref{regfn}),
\begin{equation}
b_{n,1}(x)\leq h_{n}^{3/2}\sum_{t=1}^{n}\frac{1}{h_{n}}K\left(\frac{x_{t}-x}{h_{n}}\right)\abs{\frac{x_{t}-x}{h_{n}}}\lesssim_{p}h_{n}^{3/2}\scseq_{n},\label{eq:bn1}
\end{equation}
and by \lemref{cvg}, 
\begin{equation}
\scseq_{n}^{-1/2}b_{n,2}(x)\wkc\sigma_{u}\left(\tau(x)\int_{\reals}K^{2}\right)^{1/2},\label{eq:bn2}
\end{equation}
which is strictly positive a.s. Together \eqref{bn}\textendash \eqref{bn2}
yield 
\begin{equation}
\smlabs{b_{n}(x)}\isbigop h_{n}^{3/2}\scseq_{n}^{1/2}=o(1)\label{eq:bnnegl}
\end{equation}
since $h_{n}=o(n^{-1/3})$ by assumption, and $\scseq_{n}\lesssim n$
by \lemref{std}.\noeqref{eq:bn1}

The limiting distribution of $\upsilon_{n}(x)$ may be obtained via
an application of an appropriate martingale CLT. Consider the closely
related quantity
\begin{equation}
M_{n}\defeq\left(\frac{h_{n}}{\scseq_{n}}\right)^{1/2}\sum_{t=1}^{n}K_{h_{n}}(x_{t}-x)u_{t+1}.\label{eq:MGforCLT}
\end{equation}
Under \enuref{regdist}, the summands $\{K_{h_{n}}(x_{t-1}-x)u_{t}\}$
are adapted to $\filtg_{t}=\sigma(\{x_{s,}u_{s}\}_{s\leq t})$, with
\[
\expect[K_{h_{n}}(x_{t-1}-x)u_{t}\mid\filt_{t-1}]=K_{h_{n}}(x_{t-1}-x)\cdot\expect[u_{t}\mid\filt_{t-1}]=0.
\]
Hence $M_{n}$ is the row sum of a martingale difference array, with
conditional variance
\begin{align}
\smlcv{M_{n}} & =\frac{\sigma_{u}^{2}}{\scseq_{n}h_{n}}\sum_{t=1}^{n}K^{2}\left(\frac{x_{t}-x}{h_{n}}\right)\wkc\sigma_{u}^{2}\tau(x)\int_{\reals}K^{2},\label{eq:cvlimit}
\end{align}
by \lemref{cvg}. Furthermore, the (standardised) summands in \eqref{MGforCLT}
satisfy a conditional Lyapunov condition, since
\begin{align}
 & \sum_{t=1}^{n}\expect\left[\left\{ \left(\frac{h_{n}}{\scseq_{n}}\right)^{1/2}K_{h_{n}}(x_{t}-x)u_{t+1}\right\} ^{4}\mid\filtg_{t}\right]\nonumber \\
 & \qquad\qquad\qquad=\frac{1}{(\scseq_{n}h_{n})^{2}}\sum_{t=1}^{n}K^{4}\left(\frac{x_{t}-x}{h_{n}}\right)\expect[\smlabs{u_{t+1}}^{4}\mid\filtg_{t}]\nonumber \\
 & \qquad\qquad\qquad\leq\frac{C}{\scseq_{n}h_{n}}\cdot\frac{1}{\scseq_{n}h_{n}}\sum_{t=1}^{n}K^{4}\left(\frac{x_{t}-x}{h_{n}}\right)\nonumber \\
 & \qquad\qquad\qquad\isbigop\frac{1}{\scseq_{n}h_{n}}=o(1)\label{eq:lyapunov}
\end{align}
by \enuref{regdist}, \lemref{cvg} and the fact that $\scseq_{n}h_{n}\gtrsim n^{1/2}h_{n}\goesto\infty$
(by \enuref{smoothing:hn} and \lemref{std}).

When $\{\rho_{n}\}$ is stationary or mildly integrated, the r.h.s.\ of
\eqref{cvlimit} is non-random, and so the asymptotic normality of
\eqref{MGforCLT} follows from Theorem~3.2 in \citet{HH80}: the
relevant conditions having been verified by \eqref{cvlimit} and \eqref{lyapunov}.
Thus in both cases
\begin{equation}
M_{n}\defeq\left(\frac{h_{n}}{\scseq_{n}}\right)^{1/2}\sum_{t=1}^{n}K_{h_{n}}(x_{t}-x)u_{t+1}\wkc\sigma_{u}\left(\tau(x)\int_{\reals}K^{2}\right)^{1/2}\cdot\xi,\label{eq:statmiconv}
\end{equation}
where $\tau(x)>0$ is a constant.

When $\{\rho_{n}\}$ is local to unity, $\tau(x)=\loctime_{c}(1,0)$
is a random local time density, and we must instead apply Theorem~2.1
of \citet{Wang14ET}. This requires that we additionally verify the
stronger conditions of that theorem. Under \assref{reg}, it is easy
to see that $\{(\err_{t},u_{t}),\filtg_{t}\}$ satisfy his Assumption~1.
That his Assumption~2 is satisfied follows from
\[
\max_{t\leq n}\abs{\left(\frac{h_{n}}{\scseq_{n}}\right)^{1/2}K_{h_{n}}(x_{t}-x)}\leq\frac{1}{(\scseq_{n}h_{n})^{1/2}}\sup_{x\in\reals}\smlabs{K(x)}=o(1)
\]
and
\begin{multline*}
\left(\frac{h_{n}}{n\scseq_{n}}\right)^{1/2}\sum_{t=1}^{n}K_{h_{n}}(x_{t}-x)\smlabs{\expect_{t}\err_{t+1}u_{t+1}}\\
\leq\sigma_{u}\left(\frac{h_{n}}{n\scseq_{n}}\right)^{1/2}\sum_{t=1}^{n}K_{h_{n}}(x_{t}-x)\lesssim_{p}\left(\frac{h_{n}^{3/2}\scseq_{n}}{n}\right)^{1/2}\lesssim\left(\frac{\scseq_{n}}{n}\right)^{1/2}=o(1),
\end{multline*}
which follows by the Cauchy-Schwarz inequality and \lemref{cvg}.
Finally, for the purposes of verifying his Assumption~3, we note
that
\begin{align}
\frac{1}{\scseq_{n}h_{n}}\sum_{t=1}^{n}K^{2}\left(\frac{x_{t}-x}{h_{n}}\right)u_{t+1}^{2} & =\smlcv{M_{n}}+o_{p}(1)\wkc\tau(x)\int_{\reals}K^{2}=\loctime_{c}(1,0)\int_{\reals}K^{2}\label{eq:ssqconv}
\end{align}
by Theorem~2.23 in \citet{HH80} and \eqref{cvlimit}. $\loctime_{c}$
is the local time density of the process $J_{c}$ given in \eqref{OU},
and is thus a functional of the standard Brownian motion $W$ that
emerges as the weak limit of $n^{-1/2}\sum_{t=1}^{\smlfloor{nr}}\err_{t}\wkc W(r)$.
This convergence holds jointly with \eqref{ssqconv}, and so \citeauthor{Wang14ET}'s
Assumption~3 is satisfied. It therefore follows by Theorem~2.1 of
\citet{Wang14ET} that
\begin{equation}
M_{n}\defeq\left(\frac{h_{n}}{\scseq_{n}}\right)^{1/2}\sum_{t=1}^{n}K_{h_{n}}(x_{t}-x)u_{t+1}\wkc\sigma_{u}\left(\loctime_{c}(1,0)\int_{\reals}K^{2}\right)^{1/2}\cdot\xi\label{eq:luconv}
\end{equation}
holds jointly with \eqref{cvlimit}, where $\xi\sim N[0,1]$ is independent
of $\loctime_{c}(1,0)$.

Finally, deduce from \eqref{tdecomp}, \eqref{bnnegl}, \eqref{cvlimit},
\eqref{statmiconv} and \eqref{luconv} that
\[
\hat{t}_{n}(x)=\upsilon_{n}(x)+o_{p}(1)=\frac{M_{n}}{\smlcv{M_{n}}}+o_{p}(1)\wkc\frac{\sigma_{u}\left(\tau(x)\int_{\reals}K^{2}\right)^{1/2}\cdot\xi}{\sigma_{u}\left(\tau(x)\int_{\reals}K^{2}\right)^{1/2}}=\xi\sim N[0,1]
\]
for all $\{\rho_{n}\}\in\rseqs$.

\subparagraph*{(ii).}

The argument here largely follows the proof of Lemma~2.1 in \citet{AC12Ecta}.
Let $f$ be an arbitrary bounded and Lipschitz function. It follows
from part~(i) of the proof that
\begin{equation}
\expect_{\rho_{n}}f(\hat{t}_{n})\goesto\expect f(\xi)\label{eq:wkccrit}
\end{equation}
for every $\{\rho_{n}\}\in\rseqs$, where $\xi\sim N[0,1]$ and where
$\expect_{\rho_{n}}$ is indexed by the true parameters $\rho_{n}$.
We need to show that the preceding holds for every $\{\rho_{n}\}\subset\Rho$:
i.e.\ that it holds for \emph{all} sequences, not merely those in
$\rseqs$. To that end, let $\{\rho_{n}\}\subset\Rho$ be given. To
prove \eqref{wkccrit}, it suffices to show that for every subsequence
$\{p_{n}\}$ of $\{n\}$, there exists a further subsequence $\{w_{n}\}$
of $\{p_{n}\}$ such that
\begin{equation}
\expect_{\rho_{w_{n}}}f(\hat{t}_{w_{n}})\goesto\expect f(\xi).\label{eq:wkcsub}
\end{equation}

Let $\{p_{n}\}$ be an arbitrary subsequence of $\{n\}$, and $c_{n}\defeq n(\rho_{n}-1)$.
By a compactification of $\reals$, $\{(\rho_{p_{n}},c_{p_{n}})\}$
has an accumulation point $(\abv{\rho},\abv c)\in\Rho\times[-\infty,0]$.
Now let $\{w_{n}\}$ be a subsequence of $\{p_{n}\}$, chosen as follows.
If
\begin{enumerate}
\item $\abv{\rho}<1$: choose $\{w_{n}\}$ such that $\rho_{w_{n}}\goesto\abv{\rho}$
and $\rho_{w_{n}}<1$, for all $n\in\naturals$;
\item $\abv{\rho}=1$ and either:
\begin{enumerate}
\item $\abv c\in(-\infty,0]$: choose $\{w_{n}\}$ such that $c_{w_{n}}\goesto\abv c$;
or
\item $\abv c=-\infty$: choose $\{w_{n}\}$ such that $(\rho_{w_{n}},c_{w_{n}})\goesto(1,-\infty)$.
\end{enumerate}
\end{enumerate}
Note that in case (ii)(b),
\begin{equation}
w_{n}^{-1}c_{w_{n}}=\rho_{w_{n}}-1\goesto0\label{eq:wncwn}
\end{equation}
as $n\goesto\infty$.

Corresponding to the three cases above, construct a new sequence $\{\rho_{n}^{\prime}\}$
as follows.
\begin{enumerate}
\item $\rho_{n}^{\prime}=\rho_{w_{k}}$ for $w_{k}\leq n<w_{k+1}$: then
$\rho_{n}^{\prime}\goesto\abv{\rho}<1$, whence $\{\rho_{n}^{\prime}\}$
is a stationary sequence.
\item $\rho_{n}^{\prime}=1+n^{-1}c_{w_{k}}$ for $w_{k}\leq n<w_{k+1}$.
Then by construction,
\[
c_{n}^{\prime}\defeq n(\rho_{n}^{\prime}-1)=c_{w_{k}}\qquad\text{for }w_{k}\leq n<w_{k+1},
\]
and hence in case:
\begin{enumerate}
\item $\lim_{n\goesto\infty}c_{n}^{\prime}=\lim_{k\goesto\infty}c_{w_{k}}=\text{\ensuremath{\abv c\in}}(-\infty,0]$,
so $\{\rho_{n}^{\prime}\}$ is a local-to-unity sequence;
\item $\lim_{n\goesto\infty}c_{n}^{\prime}=\lim_{k\goesto\infty}c_{w_{k}}=-\infty$,
and for $w_{k}\leq n<w_{k+1}$,
\[
\smlabs{\rho_{n}^{\prime}-1}=n^{-1}\smlabs{c_{n}^{\prime}}=n^{-1}\smlabs{c_{w_{k}}}\leq w_{k}^{-1}\smlabs{c_{w_{k}}}\goesto0
\]
as $k\goesto\infty$, by \eqref{wncwn}. Thus $\rho_{n}^{\prime}\goesto1$
and $\{\rho_{n}^{\prime}\}$ is a mildly integrated sequence.
\end{enumerate}
\end{enumerate}
It follows that $\{\rho_{n}^{\prime}\}\in\rseqs$ in all cases, and
thus \eqref{wkccrit} holds for $\{\rho_{n}^{\prime}\}$ by part~(i)
of the proof. Since by construction $\rho_{w_{n}}^{\prime}=\rho_{w_{n}}$
for all $n\in\naturals$, we finally have
\[
\expect f(\xi)=\lim_{n\goesto\infty}\expect_{\rho_{n}^{\prime}}f(\hat{t}_{n})=\lim_{n\goesto\infty}\expect_{\rho_{w_{n}}^{\prime}}f(\hat{t}_{w_{n}})=\lim_{n\goesto\infty}\expect_{\rho_{w_{n}}^{\prime}}f(\hat{t}_{w_{n}})
\]
and thus \eqref{wkcsub} holds.
\end{proof}

\section{Proofs of Theorems~\ref{thm:WPgen} and \ref{thm:fidi}\label{app:fidiproof}}

\subsection{Proof of \thmref{WPgen}\label{app:WPproof}}

Similarly to the proof of Theorem~2.1 in \citet{WP09ET}, define
\begin{align*}
L_{n}(r,a) & \defeq\frac{\tilde{c}_{n}}{n}\sum_{k=1}^{\smlfloor{nr}}f[\tilde{c}_{n}(\tilde{x}_{k,n}-a)]\\
L_{n,\epsilon}(r,a) & \defeq\frac{\tilde{c}_{n}}{n}\sum_{k=1}^{\smlfloor{nr}}\int_{\reals}f[\tilde{c}_{n}(\tilde{x}_{k,n}-a+z\epsilon)]\gauss(z)\diff z,
\end{align*}
and set $\gauss_{\epsilon}(x)\defeq\epsilon^{-1}\gauss(\epsilon^{-1}x)$.
It follows from Lemma~7 in \citet{Jeg04} that, for each $\epsilon>0$
fixed,
\[
\lim_{n\goesto\infty}\biggabs{L_{n,\epsilon}(r,a)-\frac{1}{n}\sum_{k=1}^{\smlfloor{nr}}\gauss_{\epsilon}(\tilde{x}_{k,n}-a)\int_{\reals}f}=0.
\]
Furthermore, the arguments used by \citet{WP09ET} to prove that 
\[
\lim_{\epsilon\goesto0}\lim_{n\goesto\infty}\expect\smlabs{L_{n}(r,a)-L_{n,\epsilon}(r,a)}=0,
\]
for each $a\in\reals$, which corresponds to (5.1) in that paper,
require only their Assumptions~2.1 and 2.3, both of which are maintained
here (as \enuref{WP:g} and \enuref{WP:array} respectively). Finally,
by \enuref{WPdens},
\begin{multline*}
\frac{1}{n}\sum_{k=1}^{\smlfloor{nr}}\gauss_{\epsilon}(\tilde{x}_{k,n}-a)\fdd\int_{\reals}\gauss_{\epsilon}(x-a)\tilde{\genloc}(r,x)\diff x\\
=\int_{\reals}\gauss(x)\tilde{\genloc}(r,\epsilon x+a)\diff x=\tilde{\genloc}(r,a)+o_{p}(1)
\end{multline*}
over $(r,a)\in[0,1]\times\reals$ as $n\goesto\infty$ and then $\epsilon\goesto0$,
since $\tilde{\genloc}$ is continuous a.s.\hfill\qedsymbol{}

\subsection{Proof of \thmref{fidi}\label{subsec:fidiproof}}

We separately consider $\{\rho_{n}\}\in\rseqs$ that are local to
unity, mildly integrated, and stationary.

\paragraph*{$\{\rho_{n}\}$ local to unity.}

Proposition~7.1 in \citet{WP09Ecta}, together with the arguments
used to prove their Proposition~7.2, establish that $\{\tilde{x}_{n,t}\}$
satisfies \enuref{WP:wkc} and \enuref{WP:array}. Thus, in this case,
the result follows by \thmref{WPgen}.

\paragraph*{$\{\rho_{n}\}$ mildly integrated.}

In this case, we shall need the following two results, the proofs
of which are given in \appref{mild}. Recall the definition of $\tilde{x}_{n,t}\defeq d_{n}^{-1}x_{t}$
given in \eqref{tildearray} above.
\begin{prop}
\label{prop:scaledLLN} Suppose $g$ is bounded and Lipschitz, and
$\{\rho_{n}\}$ is mildly integrated. Then
\begin{enumerate}
\item \label{enu:LLNp1}$\frac{1}{n}\sum_{t=1}^{\smlfloor{nr}}g(\tilde{x}_{n,t})=\frac{1}{n}\sum_{t=1}^{\smlfloor{nr}}\expect g(\tilde{x}_{n,t})+o_{p}(1)$;
and
\item \label{enu:LLNp2}$\frac{1}{n}\sum_{t=1}^{\smlfloor{nr}}\expect g(\tilde{x}_{n,t})\goesto r\int_{\reals}g(x)\gauss(x)\diff x$.
\end{enumerate}
\end{prop}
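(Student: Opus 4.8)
The plan is to treat the two parts separately: I would prove the mean convergence \enuref{LLNp2} via a central limit theorem for the marginals $\tilde{x}_{n,t}$ together with a bounded-convergence (Riemann sum) argument, and the law of large numbers \enuref{LLNp1} via a second-moment bound on the centred average. Throughout I would exploit the explicit linear structure $x_{t}=\sum_{j=0}^{t-1}\rho_{n}^{j}v_{t-j}$, which upon substituting $v_{s}=\sum_{k}\phi_{k}\err_{s-k}$ from \enuref{regproc} expresses $x_{t}$ as a linear functional $\sum_{i}c_{t,i}\err_{i}$ of the i.i.d.\ innovations of \enuref{iidseq}, together with the facts from \remref{std} that $\stdseq_{n}^{2}=\var(x_{n})\sim\phi^{2}(1-\rho_{n}^{2})^{-1}$, and hence $\stdseq_{n}\goesto\infty$, under mild integration.

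For \enuref{LLNp2}, I would first show that for each fixed $s\in(0,1]$, $\tilde{x}_{n,\smlfloor{ns}}=\stdseq_{n}^{-1}x_{\smlfloor{ns}}\indist N[0,1]$. Since $\var(x_{\smlfloor{ns}})/\var(x_{n})\goesto1$ — using $\rho_{n}^{2\smlfloor{ns}}\goesto0$ and $\rho_{n}^{2n}\goesto0$, both guaranteed by $n(1-\rho_{n}^{2})\goesto\infty$ — it suffices to apply the Lindeberg--Feller theorem to the array $\{c_{t,i}/\stdseq_{n}\}_{i}$ (with $t=\smlfloor{ns}$): the variances sum to $\var(x_{t})/\stdseq_{n}^{2}\goesto1$, while the negligibility condition holds because $\max_{i}\smlabs{c_{t,i}}\lesssim\sum_{k}\smlabs{\phi_{k}}=O(1)$ whereas $\stdseq_{n}\goesto\infty$. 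As $g\in\BL$ is bounded and continuous, this gives $\expect g(\tilde{x}_{n,\smlfloor{ns}})\goesto\int_{\reals}g\gauss$ for every fixed $s\in(0,1]$. Writing $\frac{1}{n}\sum_{t=1}^{\smlfloor{nr}}\expect g(\tilde{x}_{n,t})=\int_{0}^{r}\expect g(\tilde{x}_{n,\smlfloor{ns}})\diff s+O(n^{-1})$ and invoking bounded convergence (the integrand is dominated by $\smlnorm{g}_{\infty}$ and converges for a.e.\ $s$) then yields the limit $r\int_{\reals}g\gauss$.

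For \enuref{LLNp1}, I would bound the variance of the centred average, $\var(\frac{1}{n}\sum_{t=1}^{\smlfloor{nr}}g(\tilde{x}_{n,t}))=\frac{1}{n^{2}}\sum_{s,t}\cov(g(\tilde{x}_{n,s}),g(\tilde{x}_{n,t}))$, and show it is $o(1)$. For $s<t$, conditioning on the innovations $\{\err_{i}\}_{i\leq s}$ and using $\smlabs{g}\leq\smlnorm{g}_{\infty}$ gives $\smlabs{\cov(g(\tilde{x}_{n,s}),g(\tilde{x}_{n,t}))}\leq\smlnorm{g}_{\infty}\,\expect\smlabs{\expect[g(\tilde{x}_{n,t})\mid\{\err_{i}\}_{i\leq s}]-\expect g(\tilde{x}_{n,t})}$; since $g$ is Lipschitz, the conditional mean is a Lipschitz function of $\mu_{s,t}\defeq\expect[\tilde{x}_{n,t}\mid\{\err_{i}\}_{i\leq s}]$, so the right-hand side is at most a constant multiple of $\{\var(\mu_{s,t})\}^{1/2}$. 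The dominant term of $\var(\mu_{s,t})$ is $\rho_{n}^{2(t-s)}\var(\tilde{x}_{n,s})$, and since $\var(\tilde{x}_{n,s})\lesssim1$ uniformly in $s\leq n$ this yields the geometric bound $\smlabs{\cov(g(\tilde{x}_{n,s}),g(\tilde{x}_{n,t}))}\lesssim\rho_{n}^{t-s}$. Summing, $\frac{1}{n^{2}}\sum_{s\leq t}\rho_{n}^{t-s}\lesssim\frac{1}{n(1-\rho_{n})}$, which is $o(1)$ precisely because $n(1-\rho_{n})\goesto\infty$ under mild integration; Chebyshev's inequality then delivers \enuref{LLNp1}.

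The main obstacle is the uniform control of these estimates as $\rho_{n}\goesto1$, and in particular the contamination of $\expect[\tilde{x}_{n,t}\mid\{\err_{i}\}_{i\leq s}]$ by pre-time-$s$ innovations entering the moving averages $v_{s+1},v_{s+2},\ldots$. These cross terms prevent $\tilde{x}_{n,t}$ from decomposing exactly into a $\rho_{n}^{t-s}$-scaled copy of $\tilde{x}_{n,s}$ plus an independent remainder; I would absorb them into an additive correction whose contribution to $\var(\mu_{s,t})$ is summable in $t-s$ (by $\sum_{k}\smlabs{\phi_{k}}<\infty$) and carries an extra factor $\stdseq_{n}^{-2}$, so that after summation it contributes only $O(\stdseq_{n}^{-1}n^{-1})=o(1)$ and does not disturb the geometric decay. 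The delicate point throughout is that neither a stationary-type argument exploiting a fixed mixing rate, nor a local-to-unity invariance principle, applies directly: it is the borderline scaling $\stdseq_{n}\goesto\infty$ together with $n(1-\rho_{n})\goesto\infty$ that simultaneously forces the Gaussian marginal limit in \enuref{LLNp2} and supplies the decorrelation needed for \enuref{LLNp1}.
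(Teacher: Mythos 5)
Your overall strategy is sound, and part~\enuref{LLNp2} is essentially the paper's own argument: the paper likewise proves $\tilde{x}_{n,t_{n}}\wkc N[0,1]$ along every $t_{n}\in[n\epsilon,n]$ (invoking a ready-made CLT for weighted i.i.d.\ sums in place of a hand-rolled Lindeberg--Feller check, after disposing of the pre-sample variance $V_{2,t_{n}}$), and then averages, splitting off the initial $O(n\epsilon)$ terms rather than passing through a Riemann integral; the two averaging devices are interchangeable. For part~\enuref{LLNp1} you take a genuinely different route. The paper does not sum covariances: it bounds $\expect\smlabs{\sum_{t}[g(x_{t})-\expect g(x_{t})]}$ in $L^{1}$ via the projection decomposition $\sum_{k\geq0}M_{nk}$ with $M_{nk}\defeq\sum_{t}[\expect_{t-k}g(x_{t})-\expect_{(t-1)-k}g(x_{t})]$, controlling each martingale $M_{nk}$ by orthogonality and a coupling of $\err_{t-k}$ with an independent copy (\lemref{Ebound}); this yields the same rate $[n(1-\rho_{n})]^{-1/2}$ as your geometric covariance term. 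Your conditioning step is legitimate --- since $\tilde{x}_{n,t}=\mu_{s,t}+Z_{s,t}$ with $Z_{s,t}$ independent of $\filt_{-\infty}^{s}$, the conditional mean is indeed a Lipschitz function of $\mu_{s,t}$ --- and it buys a more transparent decorrelation picture, at the cost of the bookkeeping flagged below.

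The one concrete slip is in your treatment of the moving-average contamination. Writing $\tilde{\phi}_{j}\defeq\sum_{i\geq j}\smlabs{\phi_{i}}$ and $m\defeq t-s$, the exact identity $\var(\mu_{s,t})=\stdseq_{n}^{-2}\sum_{k\geq m}a_{t,t-k}^{2}$ gives, after splitting the sum at $m/2$, a bound of order $\rho_{n}^{m/2}+\tilde{\phi}_{m/2}$: the factor $\stdseq_{n}^{-2}$ you hope to retain is exactly cancelled by the $(1-\rho_{n})^{-1}$ arising from summing the geometric weights, and $\sum_{m}\tilde{\phi}_{m}=\sum_{k}k\smlabs{\phi_{k}}$ is \emph{not} finite under \enuref{regproc} alone, so the correction is neither $O(\stdseq_{n}^{-2})$ nor summable in $t-s$ as claimed. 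The argument survives nonetheless: the covariance bound becomes $\smlabs{\cov(g(\tilde{x}_{n,s}),g(\tilde{x}_{n,t}))}\lesssim\rho_{n}^{m/4}+\tilde{\phi}_{m/2}^{1/2}$, and $n^{-1}\sum_{m\leq n}\tilde{\phi}_{m/2}^{1/2}\goesto0$ by Ces\`{a}ro since $\tilde{\phi}_{m}\goesto0$, so the variance of the centred average is still $o(1)$. You should replace the summability claim with this Ces\`{a}ro step (or impose $\sum_{k}k\smlabs{\phi_{k}}<\infty$, which the paper deliberately avoids).
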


\begin{prop}
\label{prop:WP3} Suppose $\{\rho_{n}\}$ is mildly integrated. Then
$\tilde{x}_{n,t}$ satisfies \enuref{WP:array} with $\tilde{\filt}_{n,t}\defeq\sigma(\{\err_{s}\}_{s\leq t})$.
\end{prop}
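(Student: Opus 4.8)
The plan is to verify each clause of \enuref{WP:array} directly from the linear-process representation of $\{x_{t}\}$, for a fixed $\{\rho_{n}\}\in\mi$ (so $\rho_{n}\in(0,1)$, $\rho_{n}\goesto1$ and $n(1-\rho_{n})\goesto\infty$). Conditioning on $\tilde{\filt}_{n,s}=\sigma(\{\err_{m}\}_{m\le s})$ and using \eqref{regproc}, I would first isolate the part of the increment independent of the past: writing $x_{t}-x_{s}=\zeta_{n,s,t}+W_{s,t}$, where
\[
W_{s,t}\defeq\sum_{m=s+1}^{t}b_{t-m}\err_{m},\qquad b_{j}\defeq\sum_{l=0}^{j}\rho_{n}^{\,j-l}\phi_{l},
\]
one checks that $W_{s,t}$ depends only on $\err_{s+1},\dots,\err_{t}$ (hence is independent of $\tilde{\filt}_{n,s}$), while $\zeta_{n,s,t}$---collecting the contribution of $x_{s}$ and of the innovations $\err_{m}$, $m\le s$, that enter $v_{i}$ for $i>s$---is $\tilde{\filt}_{n,s}$-measurable. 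Note $b_{0}=\phi_{0}\neq0$. I would then \emph{define} $d_{n,s,t}$ through the normalised conditional variance,
\[
d_{n,s,t}^{2}\defeq\stdseq_{n}^{-2}\var(W_{s,t})=\stdseq_{n}^{-2}\sum_{j=0}^{t-s-1}b_{j}^{2},
\]
so that, conditionally on $\tilde{\filt}_{n,s}$, $(\tilde{x}_{n,t}-\tilde{x}_{n,s})/d_{n,s,t}$ is a measurable shift of the standardised sum $Z_{n,s,t}\defeq\sum_{m=s+1}^{t}c_{m}\err_{m}$, $c_{m}\defeq b_{t-m}/(\sum_{j}b_{j}^{2})^{1/2}$, with $\sum_{m}c_{m}^{2}=1$.

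The next step is to pin down the order of $d_{n,s,t}$. Since $b_{j}=\rho_{n}^{\,j}\sum_{l=0}^{j}\rho_{n}^{-l}\phi_{l}$ and $\sum_{l=0}^{j}\rho_{n}^{-l}\phi_{l}\goesto\phi$ (as $j\goesto\infty$, $\rho_{n}\goesto1$), the same partial-summation argument underlying \remref{std} gives, uniformly in $0\le s<t\le n$,
\[
d_{n,s,t}^{2}\asymp1-\rho_{n}^{\,2(t-s)},
\]
using $\stdseq_{n}^{2}\sim\phi^{2}(1-\rho_{n}^{2})^{-1}$. With this, clause \enuref{WP:array}(a) reduces to elementary estimates: on $\Omega_{n}(\eta)$ one has $t-s\ge\eta n$ and hence $d_{n,s,t}^{2}\ge c(1-\e^{-2\eta n(1-\rho_{n})})\goesto c>0$, giving the lower bound with $m_{0}=1$; and for (i)--(iii) I would split $\sum_{j}(1-\rho_{n}^{2j})^{-1/2}$ at $j\asymp(1-\rho_{n})^{-1}$, the small-$j$ block contributing $O((1-\rho_{n})^{-1})$ and the large-$j$ block contributing $O(n)$. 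Dividing by $n$ and invoking $n(1-\rho_{n})\goesto\infty$ kills the first block entirely, leaving the $O(\eta)$ / $o(1)$ contributions required by (i) and (ii) and an $O(1)$ bound for (iii).

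The crux is the density clause \enuref{WP:array}(b). Because translation affects neither the supremum norm nor the modulus of continuity, it suffices to control the density $g_{n,s,t}$ of $Z_{n,s,t}$, for which Fourier inversion gives
\[
\norm{g_{n,s,t}}_{\infty}\le\frac{1}{2\pi}\int_{\reals}\prod_{m=s+1}^{t}\smlabs{\psi_{\err}(c_{m}\lambda)}\diff\lambda.
\]
For any \emph{fixed} $n$ this is finite, since $\prod_{m}\smlabs{\psi_{\err}(c_{m}\lambda)}\le\smlabs{\psi_{\err}(c_{t}\lambda)}\in L^{1}$ (as $c_{t}=\phi_{0}/(\sum_{j}b_{j}^{2})^{1/2}\neq0$ and $\psi_{\err}\in L^{1}$ by \enuref{iidseq}); the real issue is \emph{uniformity} over $(s,t)$ and $n$, since $c_{t}\goesto0$ when $t-s\goesto\infty$. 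This I would establish by a local-limit argument on the integral. From $\expect\err_{0}^{2}=1$ one has $\smlabs{\psi_{\err}(u)}\le\e^{-\kappa u^{2}}$ for $\smlabs u\le\delta_{0}$, and, by absolute continuity of $\err_{0}$ together with Riemann--Lebesgue, $\theta\defeq\sup_{\smlabs u\ge\delta_{0}}\smlabs{\psi_{\err}(u)}<1$. Over $\smlabs\lambda\le\delta_{0}/\max_{m}\smlabs{c_{m}}$ the product is dominated by $\e^{-\kappa\lambda^{2}\sum c_{m}^{2}}=\e^{-\kappa\lambda^{2}}$, an $O(1)$ term uniformly; over the remaining range the geometric decay of the $b_{j}$ forces the number of factors with $\smlabs{c_{m}\lambda}\ge\delta_{0}$ to be $\asymp\log\smlabs\lambda/(1-\rho_{n})$, so $\prod_{m}\smlabs{\psi_{\err}(c_{m}\lambda)}$ decays faster than any fixed power of $\smlabs\lambda$ until all $t-s$ factors are active, after which the crude bound $\theta^{t-s}$, weighed against the $\asymp(t-s)^{1/2}$ from integration, is summable. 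Patching these regimes delivers $\norm{g_{n,s,t}}_{\infty}\le K$ uniformly, with the finitely many small-$n$ pairs absorbed into $K$. The continuity requirement then follows similarly: on $\Omega_{n}(\delta^{1/2m_{0}})$ one has $t-s\gtrsim\delta^{1/2}n$, so
\[
\smlabs{g_{n,s,t}(u)-g_{n,s,t}(0)}\le\frac{1}{2\pi}\int_{\reals}\min(\smlabs{\lambda u},2)\prod_{m}\smlabs{\psi_{\err}(c_{m}\lambda)}\diff\lambda\goesto0
\]
as $u\goesto0$, uniformly in the (measurable) shift, because $\smlabs\lambda\prod_{m}\smlabs{\psi_{\err}(c_{m}\lambda)}$ is integrable by the decay just described.

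I expect the main obstacle to be precisely this uniform density bound: the $\lambda$-regimes above must be stitched together so that the normalisation factor (which blows up as $\max_{m}\smlabs{c_{m}}\goesto0$) is always compensated by a sufficient number of active characteristic-function factors, and this must hold uniformly across the entire mildly integrated range---in particular for pairs with $t-s$ of moderate order and $\rho_{n}$ very close to $1$, where neither the few-term convolution bound nor a single-factor $L^{1}$ bound suffices on its own. By contrast, the decomposition, the magnitude estimate $d_{n,s,t}^{2}\asymp1-\rho_{n}^{2(t-s)}$, and the summability clause (a) are routine.
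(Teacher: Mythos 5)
Your proposal is workable but takes a genuinely different route from the paper's. The paper sets $d_{n,s,t}=1$ for all $(n,s,t)$, which makes clause (a) of \enuref{WP:array} trivial, and then verifies the density clause (b) only for pairs $(s,t)\in\Omega_{n}(\eta)$, i.e.\ with $t-s\asymp n$: writing $\tilde{x}_{n,t}-\tilde{x}_{n,s}$ as an $\tilde{\filt}_{n,s}$-measurable shift of $\stdseq_{n}^{-1}x_{1,r,r}$ ($r=t-s$), it invokes the CLT already established in the proof of \propref{scaledLLN}\enuref{LLNp2} and deduces the density bound and equicontinuity from uniform integrability of the associated characteristic functions (via \lemref{coefseq}) and the argument of Corollary~2.2 in \citet{WP09ET}. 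You instead take $d_{n,s,t}^{2}$ to be the normalised conditional variance, so clause (a) needs the (routine) estimates you give, while clause (b) becomes a local-limit bound uniform over \emph{all} $0\leq s<t\leq n$ --- a harder target, since the normalisation blows up precisely when few characteristic-function factors are available; this buys fidelity to the assumption as literally stated, at the cost of the delicate regime-stitching you identify. Two points in that stitching need repair. First, ``until all $t-s$ factors are active, after which the crude bound $\theta^{t-s}$'' presumes $\smlabs{a_{j}(\rho_{n})}$ is bounded away from zero for all $j\leq t-s$; \lemref{coefseq}\enuref{coef1} guarantees this only for $k_{0}/2\leq j\leq2k_{n}$ (partial sums of the $\phi_{l}$ may vanish for small $j$, and nothing is claimed beyond $2k_{n}$), so the terminal bound should be $\theta^{ck_{n}}$ set against $(\sum_{j}b_{j}^{2})^{1/2}\lesssim k_{n}^{1/2}$, which still suffices. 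Second, $\int_{\reals}\smlabs{\lambda}\prod_{m}\smlabs{\psi_{\err}(c_{m}\lambda)}\diff\lambda<\infty$ is not available in the all-active regime without a moment condition on $\psi_{\err}$; but equicontinuity only needs the tail integrals $\int_{\{\smlabs{\lambda}>A\}}\prod_{m}\smlabs{\psi_{\err}(c_{m}\lambda)}\diff\lambda$ to vanish uniformly as $A\goesto\infty$, which your analysis does deliver on $\Omega_{n}$.
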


It follows immediately from \propref{scaledLLN} that for every $g$
bounded and Lipschitz,
\[
\frac{1}{n}\sum_{t=1}^{\smlfloor{nr}}g(\tilde{x}_{n,t}-a)=\frac{1}{n}\sum_{t=1}^{\smlfloor{nr}}\expect g(\tilde{x}_{n,t}-a)+o_{p}(1)\inprob r\int_{\reals}g(x-a)\gauss(x)\diff x
\]
for each $(r,a)\in[0,1]\times\reals$. Thus \enuref{WPdens} holds
with $\tilde{\genloc}(r,a)=r\gauss(a)$. By \propref{WP3}, $\{\tilde{x}_{n,t}\}$
satisfies \enuref{WP:array}, whence the result follows by \thmref{WPgen}.

\paragraph*{$\{\rho_{n}\}$ stationary.}

Since $\stdseq_{n}\lesssim1$ in this case, it follows from Theorem~1
in \citet{WHH10SP}, with minor modifications, that 
\[
\frac{\stdseq_{n}}{n}\sum_{t=1}^{\smlfloor{nr}}f_{h_{n}}(x_{t}-\stdseq_{n}a)=\frac{\stdseq_{n}}{n}\sum_{t=1}^{\smlfloor{nr}}\expect f_{h_{n}}(x_{t}-\stdseq_{n}a)+o_{p}(1).
\]

It remains to determine the limit of the r.h.s. To that end, let $\nu_{\rho,t}$
and $\psi_{\rho,t}$ respectively denote the probability density and
characteristic function of $x_{t}$, and $\nu_{\rho}$ and $\psi_{\rho}$
those of the stationary solution to \eqref{regproc}, for $\rho<1$.
Let $t_{n}\in\naturals$ with $t_{n}\leq n$ and $t_{n}\goesto\infty$.
Since $\rho_{n}\goesto\rho<1$ is bounded away from unity, we have
\begin{equation}
x_{t_{n}}=\sum_{s=0}^{t_{n}-1}\rho_{n}^{s}v_{t_{n}-s}\eqdist\sum_{s=0}^{t_{n}-1}\rho_{n}^{s}v_{-s}\inprob\sum_{s=0}^{\infty}\rho^{s}v_{-s}\label{eq:xtnweak}
\end{equation}
where the r.h.s.\ has density $\nu_{\rho}$. Deduce $\psi_{\rho_{n},t_{n}}(\lambda)\goesto\psi_{\rho}(\lambda)$
for each $\lambda\in\reals$, whence
\begin{align}
\smlnorm{\nu_{\rho_{n},t_{n}}-\nu_{\rho}}_{\infty} & \leq\int_{\{\smlabs{\lambda}\leq A\}}\smlabs{\psi_{\rho_{n},t_{n}}(\lambda)-\psi_{\rho}(\lambda)}\diff\lambda+\int_{\{\smlabs{\lambda}>A\}}[\smlabs{\psi_{\rho_{n},t_{n}}(\lambda)}+\smlabs{\psi_{\rho}(\lambda)}]\diff\lambda\nonumber \\
 & \goesto0,\label{eq:denscvg}
\end{align}
as $n\goesto\infty$ and then $A\goesto\infty$, where we have used
$\smlabs{\psi_{\rho_{n},t_{n}}(\lambda)}\pmax\smlabs{\psi_{\rho_{n}}(\lambda)}\leq\smlabs{\psi_{\err}(\phi_{0}\lambda)}$
to control the integral over $\{\smlabs{\lambda}>A\}$.

Since the convergence in \eqref{xtnweak} also holds in mean square,
taking $t_{n}=n$ yields $\stdseq_{n}=\var(x_{n})^{1/2}\goesto\sigma_{\rho}$,
the standard deviation associated to the density $\nu_{\rho}$. Thus
by \eqref{denscvg}
\begin{multline*}
\expect f_{h_{n}}(x_{t_{n}}-\stdseq_{n}a)=\int_{\reals}f(x)\nu_{\rho_{n},t_{n}}(\stdseq_{n}a+h_{n}x)\diff x\\
=\int_{\reals}f(x)\nu_{\rho}(\stdseq_{n}a+h_{n}x)\diff x+o(1)\goesto\nu_{\rho}(\sigma_{\rho}a)\int_{\reals}f,
\end{multline*}
and hence
\[
\frac{\stdseq_{n}}{n}\sum_{t=\smlfloor{n\delta}+1}^{\smlfloor{nr}}\expect f_{h_{n}}(x_{t}-\stdseq_{n}a)\goesto(r-\delta)\sigma_{\rho}\nu_{\rho}(\sigma_{\rho}a)\int_{\reals}f\goesto r\sigma_{\rho}\nu_{\rho}(\sigma_{\rho}a)\int_{\reals}f
\]
as $n\goesto\infty$ and then $\delta\goesto0$, while
\begin{multline*}
\abs{\frac{\stdseq_{n}}{n}\sum_{t=1}^{\smlfloor{n\delta}}\expect f_{h_{n}}(x_{t}-\stdseq_{n}a)}\\
\leq\delta\cdot\stdseq_{n}\max_{1\leq t\leq\smlfloor{n\delta}}\smlabs{\expect f_{h_{n}}(x_{t}-\stdseq_{n}a)}\leq C\delta\max_{1\leq t\leq\smlfloor{n\delta}}\smlnorm{\nu_{\rho_{n},t}}_{\infty}\smlnorm f_{1}\goesto0
\end{multline*}
as $n\goesto\infty$ and then $\delta\goesto0$, since
\[
\smlnorm{\nu_{\rho_{n},t}}_{\infty}\leq\int_{\reals}\smlabs{\psi_{\rho_{n},t}(\lambda)}\diff\lambda\leq\int_{\reals}\smlabs{\psi_{\err}(\phi_{0}\lambda)}\diff\lambda<\infty.\tag*{\qedsymbol}
\]

\section{Proofs of Propositions \ref{prop:scaledLLN} and \ref{prop:WP3}\label{app:mild}}

\subsection{Preliminaries\label{subsec:prelims}}

Under \assref{reg}, we may write $x_{t}=\sum_{k=0}^{\infty}a_{t,k}\err_{t-k}$,
where 
\begin{equation}
a_{t,k}\defeq a_{t,k}(\rho)\defeq\sum_{l=0}^{k\pmin(t-1)}\rho^{l}\phi_{k-l}.\label{eq:attk}
\end{equation}
Observe that this quantity does not depend on $t$ for $0\leq k\leq t-1$,
and we will accordingly write $a_{k}\defeq a_{t,k}$ in this case.
We shall make frequent use of the decomposition 
\begin{equation}
x_{t}=\sum_{k=0}^{\infty}a_{t,k}\err_{t-k}=\sum_{k=t-s+1}^{\infty}a_{t,k}\err_{t-k}+\sum_{k=0}^{t-s}a_{k}\err_{t-k}\eqdef x_{s-1,t}^{\prime}+x_{s,t},\label{eq:xdecmp1}
\end{equation}
for $s\in\{1,\ldots,t\}$: note that $x_{s-1,t}^{\prime}$ and $x_{s,t}$
are independent.

We shall also need the following lemma, the proof of which appears
in Section~S.2 of the Online Supplement. Recall that $\stdseq_{n}^{2}=\var(x_{n})$
and $\phi=\sum_{k=0}^{\infty}\phi_{k}$.

\begin{lem}
\label{lem:variance} Suppose $\{\rho_{n}\}$ is mildly integrated
and $\epsilon>0$. Then
\begin{enumerate}
\item \label{enu:MInegl}$\rho_{n}^{n}\goesto0$;
\item \label{enu:MI:dn}$\stdseq_{n}^{2}\sim\phi^{2}(1-\rho_{n}^{2})^{-1}$;
and
\item for any sequence $\{t_{n}\}$ with $n\epsilon\leq t_{n}\leq n$,
\[
\var(x_{t_{n}})\sim\var(x_{1,t_{n}})\sim\stdseq_{n}^{2}.
\]
\end{enumerate}
\end{lem}

\subsection{Proof of \propref{scaledLLN}\label{subsec:llnproof}}

We first state and prove the following auxiliary lemma, which is the
key ingredient in the proof of the first part of \propref{scaledLLN}.
For a function $g$ bounded and Lipschitz, let $\smlnorm g_{\Lip}\defeq\sup_{x\neq y}\smlabs{g(x)-g(y)}/\smlabs{x-y}$.
\begin{lem}
\label{lem:Ebound} For any $g$ bounded and Lipschitz,
\begin{equation}
\expect\abs{\sum_{t=1}^{n}[g(x_{t})-\expect g(x_{t})]}\leq\smlnorm g_{\Lip}\sum_{k=0}^{\infty}\left(\sum_{t=1}^{n}a_{t,k}^{2}\right)^{1/2}\leq\smlnorm g_{\Lip}n^{1/2}\frac{\sum_{k=0}^{\infty}\smlabs{\phi_{k}}}{1-\smlabs{\rho}},\label{eq:Ebound}
\end{equation}
where the second inequality holds if $\smlabs{\rho}<1$.
\end{lem}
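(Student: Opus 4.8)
The plan is to control the centred sum $S_n\defeq\sum_{t=1}^n[g(x_t)-\expect g(x_t)]$ by a martingale decomposition organised by \emph{lag}, exploiting the linear structure $x_t=\sum_{k\geq0}a_{t,t-k}\err_{t-k}$ in the i.i.d.\ innovations. For $m\in\integers$ I would write $\filt_m^\infty\defeq\sigma(\{\err_s\}_{s\geq m})$ and, for each $t$ and $k\geq0$, set
\[
\Delta_{t,k}\defeq\expect[g(x_t)\mid\filt_{t-k}^\infty]-\expect[g(x_t)\mid\filt_{t-k+1}^\infty].
\]
Since $g$ is bounded and $x_t$ is measurable with respect to $\bigvee_m\filt_m^\infty$, reverse-martingale convergence gives $\expect[g(x_t)\mid\filt_{t-K}^\infty]\to g(x_t)$ in $L^2$ as $K\to\infty$, while $\expect[g(x_t)\mid\filt_{t+1}^\infty]=\expect g(x_t)$ by independence; hence $g(x_t)-\expect g(x_t)=\sum_{k\geq0}\Delta_{t,k}$ in $L^2$. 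Writing $Y_{n,k}\defeq\sum_{t=1}^n\Delta_{t,k}$, summing over $t$ and applying the triangle inequality over $k$ (rigorously, first to the truncation $\sum_{k\leq K}$, which telescopes to $\sum_{t}(\expect[g(x_t)\mid\filt_{t-K}^\infty]-\expect g(x_t))$, then letting $K\to\infty$) reduces matters to the per-lag bound $\expect\smlabs{Y_{n,k}}\leq(\expect Y_{n,k}^2)^{1/2}$.

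The first key step is to observe that, for \emph{fixed} $k$, the family $\{\Delta_{t,k}\}_{t=1}^n$ is a martingale difference sequence once re-indexed by innovation time $m=t-k$ and ordered by \emph{decreasing} $m$, relative to the filtration $(\filt_m^\infty)$ (which increases as $m$ decreases). Indeed $\Delta_{t,k}$ is $\filt_{t-k}^\infty$-measurable with $\expect[\Delta_{t,k}\mid\filt_{t-k+1}^\infty]=0$ by the tower property, and for $t<t'$ one has $\filt_{t'-k}^\infty\subseteq\filt_{t-k+1}^\infty$, so $\Delta_{t',k}$ is $\filt_{t-k+1}^\infty$-measurable and the two increments are orthogonal. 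This orthogonality — not available if one instead groups the increments by innovation index — is exactly what makes the per-lag $L^2$ norms add in quadrature:
\[
\expect Y_{n,k}^2=\sum_{t=1}^n\expect\Delta_{t,k}^2 .
\]

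The second key step is a sharp variance bound for $\expect\Delta_{t,k}^2$. Conditioning on $\filt_{t-k+1}^\infty$ freezes every innovation entering $x_t$ except $\err_{t-k}$ and integrates out the strictly older ones; writing the resulting $\filt_{t-k}^\infty$-conditional expectation as $\Psi(\err_{t-k})$, with the recent innovations held fixed as parameters, the coefficient of $\err_{t-k}$ in $x_t$ is $a_{t,t-k}$, so $\Psi$ is Lipschitz with constant $\leq\smlnorm g_{\Lip}\smlabs{a_{t,t-k}}$ uniformly in the conditioning variables. As $\Delta_{t,k}$ is $\Psi(\err_{t-k})$ recentred over the mean-zero, unit-variance, independent $\err_{t-k}$, the identity $\var Z=\tfrac12\expect(Z-Z')^2$ for an independent copy $Z'$ gives $\expect[\Delta_{t,k}^2\mid\filt_{t-k+1}^\infty]\leq\smlnorm g_{\Lip}^2 a_{t,t-k}^2\,\expect\err_0^2=\smlnorm g_{\Lip}^2 a_{t,t-k}^2$, with no spurious constant, and hence $\expect\Delta_{t,k}^2\leq\smlnorm g_{\Lip}^2 a_{t,t-k}^2$. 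Combining the three displays yields the first inequality in \eqref{Ebound}. The second inequality is then elementary: from \eqref{ak} one has $\smlabs{a_{t,t-k}}\leq\sum_{l=0}^k\smlabs{\rho}^{l}\smlabs{\phi_{k-l}}$ independently of $t$, so $(\sum_{t=1}^n a_{t,t-k}^2)^{1/2}\leq n^{1/2}\sum_{l=0}^k\smlabs{\rho}^{l}\smlabs{\phi_{k-l}}$, and summing over $k$ via the Cauchy product $\sum_{k\geq0}\sum_{l=0}^k\smlabs{\rho}^{l}\smlabs{\phi_{k-l}}=(1-\smlabs{\rho})^{-1}\sum_{k\geq0}\smlabs{\phi_k}$ (valid for $\smlabs{\rho}<1$) delivers the claimed $n^{1/2}(1-\smlabs{\rho})^{-1}\sum_k\smlabs{\phi_k}$ bound.

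The main obstacle is the first key step. The increments $\Delta_{t,k}$ share innovations across $t$, so naively they are correlated; the crucial realisation is that grouping \emph{by lag} and reading them against the reverse innovation filtration restores exact orthogonality, which is what produces the distinctive $\sum_k(\sum_t a_{t,t-k}^2)^{1/2}$ shape (a sum of square roots) rather than the coarser $(\sum_j\cdots)^{1/2}$ one would obtain from an innovation-indexed decomposition. Securing the \emph{sharp} constant in the conditional-variance bound — via $\var[\Psi(W)]\leq\smlnorm{\Psi}_{\Lip}^2\var W$, so that $\expect\err_0^2=1$ enters with unit coefficient — is the other point requiring care, since it is what lets the final bound be stated with $\smlnorm g_{\Lip}$ and no extraneous numerical factor.
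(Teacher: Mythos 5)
Your proof is correct and is essentially the paper's argument in mirror image: the paper performs the same lag-indexed martingale decomposition, but with the past filtration $\sigma(\{\err_s\}_{s\leq t-k})$ in place of your future one, obtains the same orthogonality across $t$ for each fixed $k$, and bounds each increment by $\smlnorm g_{\Lip}\smlabs{a_{t,t-k}}$ times a moment of $\err_{t-k}$ minus an independent copy, before finishing with the identical Cauchy-product step. The only substantive difference is that your variance identity delivers the constant-free bound exactly as stated, whereas the paper's substitution-plus-Jensen step produces a harmless extra factor of $\sqrt{2}$ in $\expect M_{nk}^{2}$ that is not reflected in the lemma's display.
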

\begin{proof}
Let $\expect_{t}[\cdot]\defeq\expect[\cdot\mid\filtg_{t}]$. We decompose
\[
g(x_{t})-\expect g(x_{t})=\sum_{k=0}^{\infty}[\expect_{t-k}g(x_{t})-\expect_{(t-1)-k}g(x_{t})]
\]
where the sum on the r.h.s.\ converges a.s.,\ since $\expect_{t-k}g(x_{t})\goesto\expect g(x_{t})$
a.s.\ as $k\goesto\infty$, by the reverse martingale convergence
theorem. Therefore we may write 
\begin{equation}
\sum_{t=1}^{n}[g(x_{t})-\expect g(x_{t})]=\sum_{k=0}^{\infty}\sum_{t=1}^{n}[\expect_{t-k}g(x_{t})-\expect_{(t-1)-k}g(x_{t})]\eqdef\sum_{k=0}^{\infty}M_{n,k}.\label{eq:Qndecomp}
\end{equation}
Clearly, by the orthogonality of martingale differences,
\begin{equation}
\expect M_{n,k}^{2}=\sum_{t=1}^{n}\expect[\expect_{t-k}g(x_{t})-\expect_{(t-1)-k}g(x_{t})]^{2}.\label{eq:Mnk2}
\end{equation}

We have
\begin{alignat*}{5}
x_{t}=\sum_{k=0}^{\infty}a_{t,k}\err_{t-k} & = &  & \sum_{s=0}^{k-1}a_{t,s}\err_{t-s} &  & +a_{t,k}\err_{t-k} &  & +\sum_{s=k+1}^{\infty}a_{t,s}\err_{t-s}\\
 & \eqdist &  & \sum_{s=0}^{k-1}a_{t,s}\err_{t-s} &  & +a_{t,k}\err^{\ast} &  & +\sum_{s=k+1}^{\infty}a_{t,s}\err_{t-s} &  & \eqdef x_{t}^{\ast}
\end{alignat*}
where `$=_{d}$' denotes equality in distribution, and $\err^{\ast}\eqdist\err_{0}$
is defined to be independent of $\{\err_{t}\}$, and hence also of
$\filtg_{t-k}$. Thus $\expect_{(t-1)-k}g(x_{t})=\expect_{t-k}g(x_{t}^{\ast})$,
whence
\[
\smlabs{\expect_{t-k}g(x_{t})-\expect_{(t-1)-k}g(x_{t})}=\smlabs{\expect_{t-k}[g(x_{t})-g(x_{t}^{\ast})]}\leq\smlnorm g_{\Lip}\smlabs{a_{t,k}}\cdot\expect_{t-k}\smlabs{\err_{t-k}-\err^{\ast}}.
\]
Hence, by \eqref{Mnk2} and Jensen's inequality, and recalling that
$\sigma_{\err}^{2}=1$, 
\[
\expect M_{n,k}^{2}\leq2\smlnorm g_{\Lip}^{2}\sum_{t=1}^{n}a_{t,k}^{2},
\]
which together with \eqref{Qndecomp} yields the first inequality
in \eqref{Ebound}.

For the second inequality, we note from \eqref{attk} that
\[
\max_{1\leq t\leq n}\smlabs{a_{t,k}}\leq\sum_{l=0}^{n-1}\smlabs{\rho}^{l}\smlabs{\phi_{k-l}},
\]
with the convention that $\phi_{-l}\defeq0$ for $l<0$. Hence if
$\smlabs{\rho}<1$,
\[
\sum_{k=0}^{\infty}\left(\sum_{t=1}^{n}a_{t,k}^{2}\right)^{1/2}\leq n^{1/2}\sum_{k=0}^{\infty}\max_{1\leq t\leq n}\smlabs{a_{t,k}}\leq n^{1/2}\sum_{l=0}^{n-1}\smlabs{\rho}^{l}\sum_{k=0}^{\infty}\smlabs{\phi_{k-l}}\leq n^{1/2}\frac{\sum_{k=0}^{\infty}\smlabs{\phi_{k}}}{1-\smlabs{\rho}}.\tag*{\qedhere}
\]
\end{proof}

\begin{proof}[Proof of \propref{scaledLLN}\enuref{LLNp1}]
 We take $r=1$ for simplicity; the proof for fixed $r\in[0,1)$
is analogous. When $\rho\in(0,1)$, applying \lemref{Ebound} to the
\emph{unstandardised} process $\{x_{t}\}$ gives the bound
\begin{equation}
\expect\abs{\sum_{t=1}^{n}[g(x_{t})-\expect g(x_{t})]}\leq\smlnorm g_{\Lip}n^{1/2}\frac{\sum_{k=0}^{\infty}\smlabs{\phi_{k}}}{1-\rho}.\label{eq:unstdbnd}
\end{equation}
It follows that replacing $x_{t}$ by the rescaled process $\tilde{x}_{n,t}=\stdseq_{n}^{-1}x_{t}$
in \eqref{unstdbnd} gives
\begin{multline}
\expect\abs{\frac{1}{n}\sum_{t=1}^{n}[g(\tilde{x}_{n,t})-\expect g(\tilde{x}_{n,t})]}\lesssim\frac{1}{n}\cdot\frac{n^{1/2}}{\stdseq_{n}(1-\rho_{n})}\\
\asymp\frac{1}{n^{1/2}}\cdot\frac{(1-\rho_{n}^{2})^{1/2}}{1-\rho_{n}}\asymp\frac{1}{[n(1-\rho_{n})]^{1/2}}=o(1),\label{eq:gdiffnegl}
\end{multline}
where we have used \lemref{variance}.
\end{proof}

\begin{proof}[Proof of \propref{scaledLLN}\enuref{LLNp2}]
 Let $\epsilon>0$. It is proved below that along every sequence
$\{t_{n}\}\subset[n\epsilon,n]$,
\begin{equation}
\tilde{x}_{n,t_{n}}\wkc N[0,1],\label{eq:UCLT}
\end{equation}
whence $\expect g(\tilde{x}_{n,t_{n}})\goesto\int_{\reals}g(x)\gauss(x)\diff x$,
since $g$ is bounded. Then by the preceding and the boundedness of
$g$,
\[
\abs{\frac{1}{n}\sum_{t=1}^{\smlfloor{nr}}\left[\expect g(\tilde{x}_{n,t})-\int g\gauss\right]}\leq\epsilon\smlnorm g_{\infty}+\sup_{t\in[n\epsilon,n]}\abs{\expect g(\tilde{x}_{n,t})-\int g\gauss}\goesto\epsilon\smlnorm g_{\infty}.
\]
Since $\epsilon$ was arbitrary, the result follows.

It remains to prove \eqref{UCLT}. It follows from \lemref{variance}
that $\var(\tilde{x}_{n,t_{n}})\goesto1$. Moreover, we may write
$\tilde{x}_{n,t_{n}}=\sum_{k=-\infty}^{n}\delta_{n,k}\err_{k}$, where
\[
\delta_{n,k}=\begin{cases}
\stdseq_{n}^{-1}a_{t_{n},k} & \text{if }k\leq t_{n},\\
0 & \text{otherwise};
\end{cases}
\]
and
\begin{equation}
\max_{k\leq n}\smlabs{\delta_{n,k}}\leq\stdseq_{n}^{-1}\max_{k\leq t_{n}}\smlabs{a_{t_{n},k}}\leq\stdseq_{n}^{-1}\sum_{i=0}^{\infty}\smlabs{\phi_{i}}=o(1).\label{eq:dnk}
\end{equation}
\eqref{UCLT} therefore follows from Lemma~2.1(i) in \citet*{ADGK14ET}.
\end{proof}

\subsection{Proof of \propref{WP3}}

We shall need the following results, proofs of which appear in Section~S.2
of the Online Supplement. For $\{\rho_{n}\}$ mildly integrated, define
$k_{n}\defeq k_{n}(\{\rho_{n}\})$ to be the largest integer for which
\begin{equation}
k_{n}(\{\rho_{n}\})\leq[(1-\rho_{n})^{-1}\pmin n]/2;\label{eq:kn}
\end{equation}
observe (by \lemref{variance}) that $k_{n}\asymp\stdseq_{n}^{2}$.
Recall the definition of $a_{k}=a_{k}(\rho_{n})$ given immediately
after \eqref{attk} above.
\begin{lem}
\label{lem:coefseq} Suppose $\{\rho_{n}\}$ is mildly integrated.
Then there exist $k_{0},n_{0}\in\naturals$ with $k_{0}$ even, such
that
\begin{enumerate}
\item $\rho_{n}^{k},\rho_{n}^{-k}\in[C_{1},C_{2}]$ for some $C_{1},C_{2}\in(0,\infty)$
for all $n\geq n_{0}$ and $0\leq k\leq2k_{n}$; and
\item for some $\blw a,\abv a\in(0,\infty)$, $\smlabs{a_{0}}\geq\blw a$
and for all $n\geq n_{0}$,
\end{enumerate}
\begin{equation}
\blw a\leq\min_{k_{0}/2\leq k\leq2k_{n}}\smlabs{a_{k}}\leq\max_{0\leq k\leq n}\smlabs{a_{k}}\leq\abv a.\label{eq:coefseqbnd}
\end{equation}
\end{lem}
\begin{lem}
\label{lem:cfbound}Let $\{\vartheta_{k}\}_{k\in\naturals}$ have
$\sigma_{\vartheta}^{2}\defeq\sum_{k=1}^{\infty}\vartheta_{k}^{2}>0$,
and $\{\err_{t}\}_{t\in\integers}$ be as in \enuref{iidseq}. There
exists a bounded function $G(A,\sigma^{2},\psi_{\epsilon})$, not
otherwise depending on $\{\vartheta_{k}\}$, such that $\sigma^{2}\elmap G(A;\sigma^{2},\psi_{\epsilon})$
is decreasing in $\sigma$,
\begin{equation}
\int_{\{\smlabs{\lambda}\geq A\}}\abs{\expect\left(\i\lambda\sum_{k=1}^{\infty}\vartheta_{k}\err_{k}\right)}\diff\lambda\leq G(A;\sigma_{\vartheta}^{2},\psi_{\epsilon})\leq C\sigma_{\vartheta}^{-1},\quad\forall A\geq0\label{eq:cflinprocbnd}
\end{equation}
for some $C<\infty$ depending only on $\smlnorm{\psi_{\epsilon}}_{1}$,
and $\lim_{A\goesto\infty}G(A;\sigma_{\vartheta}^{2},\psi_{\epsilon})=0$.
\end{lem}
\begin{lem}
\label{lem:algebra}Let $\{\rho_{n}\}$ be mildly integrated and $\eta\in(0,1]$.
Then
\[
\frac{1}{n}\int_{1}^{\eta n}\frac{1}{(1-\rho_{n}^{u})^{1/2}}\diff u=\eta+o(1).
\]
\end{lem}
\begin{proof}[Proof of \propref{WP3}]
 We take
\[
d_{n,s,t}\defeq(1-\rho_{n}^{2(t-s)})^{1/2}.
\]
Since $\rho_{n}\goesto1$ with $\rho_{n}<1$, we assume throughout
that $\rho_{n}\in(\frac{1}{2},1)$.

We first consider part\ (a) of \enuref{WP:array}. For (a)(i), we
have
\begin{align*}
\frac{1}{n}\sum_{t=(1-\eta)n}^{n}d_{n,0,t}^{-1} & =\frac{1}{n}\sum_{t=(1-\eta)n}^{n}\frac{1}{(1-\rho_{n}^{2t})^{1/2}}\leq\frac{1}{n}\cdot\frac{\eta n}{(1-\rho_{n}^{2(1-\eta)n})^{1/2}}\goesto\eta
\end{align*}
by \lemref{variance}. For (a)(ii), we note that
\begin{multline*}
\frac{1}{n}\max_{0\leq s\leq(1-\eta)n}\sum_{t=s+1}^{s+\eta n}d_{n,s,t}^{-1}=\frac{1}{n}\sum_{k=1}^{\eta n}\frac{1}{(1-\rho_{n}^{2k})^{1/2}}\leq\frac{1}{n}\sum_{k=1}^{\eta n}\frac{1}{(1-\rho_{n}^{k})^{1/2}}\\
\leq\frac{1}{n}\left\{ \frac{1}{(1-\rho_{n})^{1/2}}+\int_{1}^{\eta n}\frac{1}{(1-\rho_{n}^{u})^{1/2}}\diff u\right\} \goesto\eta,
\end{multline*}
where the final convergence follows by \lemref{algebra}. Finally,
for (a)(iii), essentially the preceding with $\eta=1$ yields
\[
\frac{1}{n}\max_{0\leq s\leq n-1}\sum_{t=s+1}^{n}d_{n,s,t}^{-1}=\frac{1}{n}\sum_{k=1}^{n}\frac{1}{(1-\rho_{n}^{2k})^{1/2}}\goesto1.
\]
Thus part\ (a) of \enuref{WP:array} is satisfied.

We next turn to part~(b) of \enuref{WP:array}. By the Fourier inversion
formula and \lemref{cfbound}, uniform boundedness of $\{h_{n,s,t}\}$
will follow if the variance of $(\tilde{x}_{n,t}-\tilde{x}_{n,s})/d_{n,s,t}$,
conditional on $\tilde{\filt}_{n,s}\defeq\sigma(\{\err_{r}\}_{r\leq s})$,
is bounded away from zero. As in \eqref{xdecmp1} above, we have
\[
x_{t}=\sum_{k=0}^{\infty}a_{t,k}\err_{t-k}=\sum_{k=t-s}^{\infty}a_{t,k}\err_{t-k}+\sum_{k=0}^{t-s-1}a_{k}\err_{t-k}\eqdef x_{s,t}^{\prime}+x_{s+1,t}.
\]
Since $x_{s+1,t,t}$ is independent of $x_{s}$ and $x_{s,t}^{\prime}$,
both of which are $\tilde{\filt}_{n,s}$-measurable, we have
\[
\var\left(\frac{\tilde{x}_{n,t}-\tilde{x}_{n,s}}{d_{n,s,t}}\mid\tilde{\filt}_{n,s}\right)=\var\left(\frac{x_{n,t}-x_{n,s}}{d_{n,s,t}\stdseq_{n}}\mid\tilde{\filt}_{n,s}\right)=\var\left(\frac{x_{s+1,t}}{d_{n,s,t}\stdseq_{n}}\right).
\]
Further, taking $r=t-s$ we have
\[
x_{s+1,t}=\sum_{k=0}^{t-s-1}a_{k}\err_{t-k}\eqdist\sum_{k=0}^{r-1}a_{k}\err_{t-k}=x_{1,r}.
\]
Since $d_{n,s,t}=d_{n,0,t-s}=d_{n,0,r}$, it follows that
\[
\var\left(\frac{x_{s+1,t}}{d_{n,s,t}\stdseq_{n}}\right)=\frac{\var(x_{1,r})}{d_{n,0,r}^{2}\stdseq_{n}^{2}}\geq C\frac{1-\rho_{n}^{2}}{1-\rho_{n}^{2r}}\var(x_{1,r})\eqdef Cg_{n,r}
\]
by \lemref{variance}, for some $C>0$ (depending on $\phi$), for
all $n$ sufficiently large.

We thus need to show that $\inf_{n\geq n_{0}}\inf_{1\leq r\leq n}g_{n,r}>0$
for some $n_{0}\in\naturals$. To that end, we note that for $k_{0}$
as in \lemref{coefseq} and $k_{n}$ as in \eqref{kn},
\begin{equation}
\var(x_{1,r})=\sum_{k=0}^{r}a_{k}^{2}\geq\blw a^{2}\cdot\begin{cases}
1 & \text{if }1\leq r\leq k_{0}\\
r/2 & \text{if }k_{0}+1\leq r\leq k_{n}\\
k_{n}/2 & \text{if }k_{n}+1\leq r\leq n
\end{cases}\label{eq:var1rr}
\end{equation}
for $n$ sufficiently large. We also note the inequality
\[
\frac{1-x^{2}}{1-x^{2r}}=\frac{1}{\sum_{l=0}^{r}x^{2l}}\geq\frac{1}{r},\quad\forall r\in\naturals,x\in(0,1).
\]
Considering each of the three cases in \eqref{var1rr} in turn, we
have:
\begin{enumerate}
\item $1\leq r\leq k_{0}$: then
\[
g_{n,r}\geq\frac{1-\rho_{n}^{2}}{1-\rho_{n}^{2r}}\cdot\blw a^{2}\geq\frac{1}{r}\blw a^{2}\geq\frac{1}{2k_{0}}\blw a^{2};
\]
\item $k_{0}+1\leq r\leq k_{n}$: then
\[
g_{n,r}\geq\frac{1-\rho_{n}^{2}}{1-\rho_{n}^{2r}}\cdot\frac{r}{2}\cdot\blw a^{2}\geq\frac{1}{2}\blw a^{2}\geq\frac{1}{2}\blw a^{2};
\]
\item $k_{n}+1\leq r\leq n$: then for some $C\in(0,\infty)$,
\[
g_{n,r}\geq\frac{1-\rho_{n}^{2}}{1-\rho_{n}^{2r}}\cdot\frac{k_{n}}{2}\cdot\blw a^{2}\geq\frac{C}{(1-\rho_{n}^{2r})}\blw a^{2}\geq\frac{C}{2}\blw a^{2},
\]
where the second inequality follows from $k_{n}\asymp(1-\rho_{n})^{-1}\asymp(1-\rho_{n}^{2})^{-1}$,
and the third inequality from \lemref{coefseq}.
\end{enumerate}
Thus $\inf_{1\leq r\leq n}g_{n,r}$ is bounded away from zero for
$n$ sufficiently large, whence $\{h_{n,s,t}\}$ is uniformly bounded.

Finally, in view of the definition of $\Omega_{n}(\eta)$, \eqref{hequic}
only concerns those $s$ and $t$ for which $(1-\delta)n\geq t-s=r=r_{n}\geq n\delta$
for some $\delta\in(0,1)$. For such $r_{n}$, we have $d_{n,0,r_{n}}=(1-\rho_{n}^{2r_{n}})^{1/2}\goesto1$
by \lemref{variance}, and so arguments given in the proof of \propref{scaledLLN}\enuref{LLNp2}
yield 
\[
z_{n}\defeq\frac{x_{1,r_{n}}}{\stdseq_{n}\cdot d_{n,0,r_{n}}}=(1+o_{p}(1))\cdot\stdseq_{n}^{-1}x_{1,r_{n}}\wkc N[0,1].
\]
Letting $\psi_{z_{n}}$ denote the characteristic function of $z_{n}$,
arguments given in the proof of Corollary~2.2 in \citet{WP09ET}
then imply that \eqref{hequic} holds if the sequence $\{\psi_{z_{n}}\}$
is uniformly integrable. But this is immediate from \lemref{cfbound}
and the fact that $\var(z_{n})\goesto1$, which itself follows from
\lemref{variance}.
\end{proof}
\cleartooddpage{}

\setcounter{page}{1}

\setcounter{section}{19}

\setcounter{subsection}{0}

\setcounter{equation}{0}

\fancyhead[CE]{\scshape supplementary material}

\section*{Online Supplementary Material for `Asymptotic Theory for Kernel
Estimators under Moderate Deviations from a Unit Root' by J.\ A.\ Duffy}

Throughout the following, Assumptions~\assref{reg} and \assref{smoothing}
are always maintained, even when not explicitly referenced. \subsecref{auxunif}
provides the proofs of Lemmas~\ref{lem:std}\textendash \ref{lem:sig2},
and \subsecref{supp:mild} provides the proofs of Lemmas~\ref{lem:variance}
and \ref{lem:coefseq}\textendash \ref{lem:algebra}.

\subsection{Proofs of auxiliary lemmas from \appref{unifproof}\label{subsec:auxunif}}
\begin{proof}[Proof of \lemref{std}]
 Since $\stdseq_{n}^{2}=\var(x_{n})$ is bounded away from zero in
all cases, it suffices to prove that $\stdseq_{n}\lesssim n^{1/2}$
when $\{\rho_{n}\}\in\rseqs$ is mildly integrated or local to unity.
To that end, recall from \eqref{xdecmp1} that
\[
x_{n}=\sum_{k=1}^{n-1}a_{k}\err_{t-k}+\sum_{k=n}^{\infty}a_{t,k}\err_{t-k}
\]
where $a_{t,k}=\sum_{l=0}^{k\pmin(t-1)}\rho^{l}\phi_{k-l}$. Hence
\begin{align*}
\var(x_{n})=\sum_{k=1}^{n-1}a_{t,k}^{2}+\sum_{k=n}^{\infty}a_{t,k}^{2} & \leq\sum_{k=1}^{n-1}\left(\sum_{l=0}^{k}\rho_{n}^{l}\phi_{k-l}\right)^{2}+\sum_{k=n}^{\infty}\left(\sum_{l=0}^{n-1}\rho_{n}^{l}\phi_{k-l}\right)^{2}\\
 & \leq n\left(\sum_{i=0}^{\infty}\smlabs{\phi_{i}}\right)^{2}+\sum_{k=n}^{\infty}\left(\sum_{l=0}^{n-1}\smlabs{\phi_{k-l}}\right)^{2}
\end{align*}
For the second r.h.s. term, we have
\begin{align*}
\sum_{k=n}^{\infty}\left(\sum_{l=0}^{n-1}\smlabs{\phi_{k-l}}\right)^{2}\lesssim\sum_{k=n}^{\infty}\sum_{l=0}^{n-1}\smlabs{\phi_{k-l}} & =\left(\sum_{k=n}^{2n}+\sum_{k=2n+1}^{\infty}\right)\sum_{l=0}^{n-1}\smlabs{\phi_{k-l}}\\
 & \leq\sum_{k=0}^{n}\sum_{l=k}^{\infty}\smlabs{\phi_{l}}+n\sum_{k=n}^{\infty}\smlabs{\phi_{k}}=o(n).\qedhere
\end{align*}
\end{proof}
\begin{proof}[Proof of \lemref{cvg}]
 As noted in the text, the stated convergence follows immediately
from \thmref{fidi}: see also \remref{cvgtozero}. Regarding the strict
positivity of $\tau(x)$: when $\{\rho_{n}\}$ is local to unity,
this follows from \citeauthor{Ray1963IJM}'s \citeyearpar{Ray1963IJM}
theorem; when $\{\rho_{n}\}$ is mildly integrated this is immediate
from $\varphi$ being the standard normal density; and when $\{\rho_{n}\}$
is stationary, this follows from the density $f_{\err}$ of $\err_{t}$
having been assumed strictly positive (see \enuref{iidseq}).
\end{proof}
\begin{proof}[Proof of \lemref{sig2}]
 We first show that $\hat{\regfn}_{n}(x)=\regfn(x)+o_{p}(1)$. To
that end, decompose
\begin{align*}
\hat{\regfn}_{n}(x)-\regfn(x) & =\frac{A_{n,1}+A_{n,2}}{\frac{1}{\scseq_{n}}\sum_{t=1}^{n}K_{h_{n}}(x_{t}-x)}
\end{align*}
where:
\begin{align}
\smlabs{A_{n,1}} & \defeq\frac{1}{\scseq_{n}}\sum_{t=1}^{n}K_{h_{n}}(x_{t}-x)\smlabs{\regfn(x_{t})-\regfn(x)}\nonumber \\
 & \leq\frac{Ch_{n}}{\nseq_{n}}\sum_{t=1}^{n}\frac{1}{h_{n}}K\left(\frac{x_{t}-x}{h_{n}}\right)\abs{\frac{x_{t}-x}{h_{n}}}\nonumber \\
 & \lesssim_{p}h_{n}\label{eq:An1bnd}
\end{align}
by \lemref{cvg} and the Lipschitz continuity of $\regfn$; and
\[
A_{n,2}\defeq\frac{1}{\nseq_{n}}\sum_{t=1}^{n}K_{h_{n}}(x_{t}-x)u_{t+1}=o_{p}(1)
\]
where the claimed negligibility follows since $A_{n,2}$ is a martingale
with variance
\begin{multline*}
\expect A_{n,2}^{2}=\frac{1}{\scseq_{n}^{2}h_{n}^{2}}\sum_{t=1}^{n}\expect K^{2}\left(\frac{x_{t}-x}{h_{n}}\right)u_{t+1}^{2}\\
=\frac{1}{\scseq_{n}h_{n}}\cdot\frac{\sigma^{2}}{\scseq_{n}}\sum_{t=1}^{n}\expect\frac{1}{h_{n}}\sum_{t=1}^{n}K^{2}\left(\frac{x_{t}-x}{h_{n}}\right)\lesssim_{p}\frac{1}{\scseq_{n}h_{n}}=o(1)
\end{multline*}
by \lemref{cvg} and $n^{1/2}h_{n}\goesto\infty$ (see \enuref{smoothing:hn}).
Since by \lemref{cvg}
\[
\frac{1}{\scseq_{n}}\sum_{t=1}^{n}K_{h_{n}}(x_{t}-x)\wkc\tau(x)
\]
which is a.s.\ positive, we have $\hat{\regfn}_{n}(x)=\regfn(x)+o_{p}(1)$
as claimed.

The remainder of the proof follows similar lines to the proof of Theorem~3.2
in \citet{WP09Ecta}. Recalling
\[
\hat{\sigma}_{u}^{2}(x)=\frac{\frac{1}{\scseq_{n}}\sum_{t=1}^{n}K_{h_{n}}(x_{t}-x)[y_{t+1}-\hat{\regfn}_{n}(x)]^{2}}{\frac{1}{\scseq_{n}}\sum_{t=1}^{n}K_{h_{n}}(x_{t}-x)}
\]
we decompose the numerator as
\begin{align*}
 & \frac{1}{\scseq_{n}}\sum_{t=1}^{n}K_{h_{n}}(x_{t}-x)[y_{t+1}-\hat{\regfn}_{n}(x)]^{2}\\
 & \qquad\qquad=\frac{1}{\scseq_{n}}\sum_{t=1}^{n}K_{h_{n}}(x_{t}-x)u_{t+1}^{2}+\frac{2}{\scseq_{n}}\sum_{t=1}^{n}K_{h_{n}}(x_{t}-x)[\regfn(x_{t})-\hat{\regfn}_{n}(x)]u_{t+1}\\
 & \qquad\qquad\qquad\qquad+\frac{1}{\scseq_{n}}\sum_{t=1}^{n}K_{h_{n}}(x_{t}-x)[\regfn(x_{t})-\hat{\regfn}_{n}(x)]^{2}\\
 & \qquad\qquad\eqdef B_{n,1}+2B_{n,2}+B_{n,3}.
\end{align*}

Letting $\zeta_{t}\defeq u_{t}^{2}-\sigma_{u}^{2}$, we claim that
\begin{align}
B_{n,1} & =\frac{\sigma_{u}^{2}}{\scseq_{n}}\sum_{t=1}^{n}K_{h_{n}}(x_{t}-x)+\frac{1}{\scseq_{n}}\sum_{t=1}^{n}K_{h_{n}}(x_{t}-x)\zeta_{t+1}\label{eq:An1}\\
 & \wkc\sigma_{u}^{2}\tau(x).\nonumber 
\end{align}
The convergence of the first r.h.s.\ term in \eqref{An1} follows
from \lemref{cvg}. Regarding the second r.h.s.\ term, we note that
since $\zeta_{t+1}\defeq u_{t+1}^{2}-\sigma_{u}^{2}$ is a martingale
difference under \enuref{regdist}, this term is a martingale with
conditional variance
\[
\frac{1}{e_{n}h_{n}}\cdot\frac{1}{e_{n}}\sum_{t=1}^{n}\frac{1}{h_{n}}K^{2}\left(\frac{x_{t}-x}{h_{n}}\right)\expect[\zeta_{t+1}^{2}\mid\filtg_{t}]\lesssim_{p}\frac{1}{e_{n}h_{n}}=o(1)
\]
by \lemref{cvg} and $\sup_{t}\expect[\zeta_{t+1}^{2}\mid\filtg_{t}]<\infty$
a.s.\ (under \enuref{regdist}). It follows by Corollary 3.1 of \citet{HH80}
that, indeed,
\[
\frac{1}{\scseq_{n}}\sum_{t=1}^{n}K_{h_{n}}(x_{t}-x)\zeta_{t+1}\inprob0.
\]

Next, we have
\begin{align*}
B_{n,3} & \leq C\frac{1}{\scseq_{n}}\sum_{t=1}^{n}K_{h_{n}}(x_{t}-x)\left\{ [\regfn(x_{t})-\regfn(x)]^{2}+[\hat{\regfn}_{n}(x)-\regfn(x)]^{2}\right\} \\
 & =O_{p}(h_{n}^{2})+o_{p}(1)\\
 & =o_{p}(1)
\end{align*}
by an analogous argument as was used to prove \eqref{An1bnd}, and
$\hat{\regfn}_{n}(x)=\regfn(x)+o_{p}(1)$. Finally
\[
B_{n,2}\leq(B_{n,1})^{1/2}(B_{n,3})^{1/2},
\]
by the Cauchy-Schwarz inequality; whence by \lemref{cvg} and the
preceding,
\[
\hat{\sigma}_{u}^{2}(x)=\frac{B_{n,1}+B_{n,2}+B_{n,3}}{\frac{1}{\scseq_{n}}\sum_{t=1}^{n}K_{h_{n}}(x_{t}-x)}\wkc\frac{\sigma_{u}^{2}\tau(x)}{\tau(x)}=\sigma_{u}^{2}.\tag*{\qedhere}
\]
\end{proof}

\subsection{Proofs of auxiliary lemmas from \appref{mild}\label{subsec:supp:mild}}
\begin{proof}[Proof of \lemref{variance}]
~ Letting $c_{n}\defeq n(\rho_{n}-1)\goesto-\infty$, we note that
for every $M<\infty$, we may take $n$ sufficiently large such that
$c_{n}<-M$, whence 
\[
\rho_{n}^{n\epsilon}=\left(1+\frac{c_{n}}{n}\right)^{n\epsilon}\leq\left(1-\frac{M}{n}\right)^{n\epsilon}\goesto\e^{-M\epsilon}\goesto0
\]
as $n\goesto\infty$ and then $M\goesto\infty$. Thus (i) holds.

Now taking $s=1$ in \eqref{xdecmp1}, we have
\[
x_{t}=\sum_{k=0}^{t-1}a_{k}\err_{t-k}+\sum_{k=t}^{\infty}a_{t,k}\err_{t-k}=x_{1,t}+x_{0,t}^{\prime}
\]
where $x_{1,t}$ and $x_{0,t}^{\prime}$ are independent, with variances
$\varsigma_{1,t}^{2}\defeq\var(x_{1,t})$ and $\varsigma_{2,t}^{2}\defeq\var(x_{0,t}^{\prime})$
respectively. Let $\{t_{n}\}\subseteq[n\epsilon,n]$ be as in the
statement of part~(iii) of the lemma. We shall prove below that
\begin{equation}
(1-\rho_{n}^{2})\var(x_{t_{n}})=(1-\rho_{n}^{2})(\varsigma_{1,t_{n}}^{2}+\varsigma_{2,t_{n}}^{2})=(1-\rho_{n}^{2})\varsigma_{1,t_{n}}^{2}+o(1)\goesto\phi^{2},\label{eq:varlimit}
\end{equation}
from which both parts (ii) and (iii) of the lemma immediately follow.

Some tedious algebra (verified immediately below this proof) yields
\begin{align}
\varsigma_{1,t_{n}}^{2} & =\sum_{k=0}^{t_{n}-1}\left(\sum_{l=0}^{k}\rho_{n}^{k-l}\phi_{l}\right)^{2}=\sum_{i=0}^{t_{n}-1}\phi_{i}^{2}\sum_{k=0}^{t_{n}-i-1}\rho_{n}^{2k}+2\sum_{i=0}^{t_{n}-1}\sum_{j=i+1}^{t_{n}-1}\phi_{i}\phi_{j}\sum_{k=0}^{t_{n}-j-1}\rho_{n}^{2k+(j-i)}\ \ \label{eq:tedious}
\end{align}
whence, since $\rho_{n}\in(0,1)$,
\begin{align*}
(1-\rho_{n}^{2})\varsigma_{1,t_{n}}^{2} & =\sum_{i=0}^{t_{n}-1}\phi_{i}^{2}(1-\rho_{n}^{2(t_{n}-i)})+2\sum_{i=0}^{t_{n}-1}\sum_{j=i+1}^{t_{n}-1}\phi_{i}\phi_{j}(1-\rho_{n}^{2(t_{n}-j)+(j-i)})
\end{align*}
Since $\rho_{n}^{2(t_{n}-i)}\leq\rho_{n}^{2(\smlfloor{n\epsilon}-i)}\goesto0$
as $n\goesto\infty$ for each \emph{fixed} $i\in\naturals$ by part~(i),
and $\sum_{i=0}^{\infty}\smlabs{\phi_{i}}<\infty$, it follows that
\[
(1-\rho_{n}^{2})\varsigma_{1,t_{n}}^{2}\goesto\sum_{i=0}^{\infty}\phi_{i}^{2}+2\sum_{i=0}^{\infty}\sum_{j=i+1}^{\infty}\phi_{i}\phi_{j}=\phi^{2}.
\]

Regarding $\varsigma_{2,t_{n}}^{2}$, we note that since $\smlabs{\rho_{n}}\leq1$
and $C_{\phi}\defeq\sum_{i=0}^{\infty}\smlabs{\phi_{i}}<\infty$
\begin{align*}
\varsigma_{2,t_{n}}^{2} & =\sum_{k=t_{n}}^{\infty}\left(\sum_{l=0}^{t_{n}-1}\rho^{l}\phi_{k-l}\right)^{2}\leq C_{\phi}\sum_{k=t_{n}}^{\infty}\sum_{l=0}^{t_{n}-1}\rho_{n}^{l}\smlabs{\phi_{k-l}}\leq C_{\phi}\sum_{l=0}^{t_{n}-1}\rho_{n}^{l}\tilde{\phi}_{t_{n}-l},
\end{align*}
where $\tilde{\phi}_{j}\defeq\sum_{i=j}^{\infty}\smlabs{\phi_{i}}$.
Further,
\begin{multline*}
\sum_{l=0}^{t_{n}-1}\rho_{n}^{l}\tilde{\phi}_{t_{n}-l}=\left(\sum_{l=0}^{\smlfloor{t_{n}/2}-1}+\sum_{l=\smlfloor{t_{n}/2}}^{t_{n}-1}\right)\rho_{n}^{l}\tilde{\phi}_{t_{n}-l}\\
\leq\left(\tilde{\phi}_{\smlfloor{t_{n}/2}-1}+C_{\phi}\rho_{n}^{\smlfloor{t_{n}/2}}\right)\sum_{l=0}^{\smlfloor{t_{n}/2}-1}\rho_{n}^{l}=o[(1-\rho_{n}^{2})^{-1}],
\end{multline*}
since $\tilde{\phi}_{\smlfloor{t_{n}/2}}\goesto0$ and $\rho_{n}^{\smlfloor{t_{n}/2}}\goesto0$
by part~(i), and
\[
\sum_{l=0}^{\smlfloor{t_{n}/2}}\rho_{n}^{l}\leq(1-\rho_{n})^{-1}\asymp(1-\rho_{n}^{2})^{-1},
\]
whence $\varsigma_{2,t_{n}}^{2}=o[(1-\rho_{n}^{2})^{-1}]$.
\end{proof}
\begin{proof}[Verification of \eqref{tedious}]
 Dropping the $n$ subscript from $t_{n}$ and $\rho_{n}$ for simplicity,
and setting $m\defeq t-1$, we have

\begin{align*}
\sum_{k=0}^{m}\left(\sum_{l=0}^{k}\rho^{k-l}\phi_{l}\right)^{2} & =\sum_{k=0}^{m}\sum_{i=0}^{k}\sum_{j=0}^{k}\rho^{2k-i-j}\phi_{i}\phi_{j}\\
 & =\sum_{i=0}^{m}\sum_{j=0}^{m}\phi_{i}\phi_{j}\sum_{k=i\pmax j}^{m}\rho^{2k-i-j}\\
 & =\sum_{i=0}^{m}\phi_{i}^{2}\sum_{k=i}^{m}\rho^{2(k-i)}+2\sum_{i=0}^{m}\sum_{j=i+1}^{m}\phi_{i}\phi_{j}\sum_{k=j}^{m}\rho^{2(k-j)+(j-i)}\\
 & =\sum_{i=0}^{m}\phi_{i}^{2}\sum_{k=0}^{m-i}\rho^{2k}+2\sum_{i=0}^{m}\sum_{j=i+1}^{m}\phi_{i}\phi_{j}\sum_{k=0}^{m-j}\rho^{2k+(j-i)}.\qedhere
\end{align*}
\end{proof}
\begin{proof}[Proof of \lemref{coefseq}]
 When $\{\rho_{n}\}$ is mildly integrated, $\rho_{n}\in(0,1)$ and
the upper bound in \eqref{coefseqbnd} follows trivially from $\smlabs{a_{k}(\rho_{n})}\leq\sum_{i=0}^{\infty}\smlabs{\phi_{i}}$.
Further, for any $0\leq k\leq2k_{n}$, 
\[
\rho_{n}^{2k_{n}}\leq\rho_{n}^{k}\leq\rho_{n}^{-k}\leq\rho_{n}^{-2k_{n}}.
\]
Noting that $\rho^{(1-\rho)^{-1}}\goesto\e^{-1}$ as $\rho\goesto1$,
and $2k_{n}\sim(1-\rho_{n})^{-1}$, it follows that $(\rho_{n}^{2k_{n}},\rho_{n}^{-2k_{n}})\goesto(\e^{-1},\e)$
as $n\goesto\infty$. Thus there exists an $n_{0}\in\naturals$ and
$C_{1},C_{2}\in(0,\infty)$ such that $\rho_{n}^{k},\rho_{n}^{-k}\in[C_{1},C_{2}]$
for all $n\geq n_{0}$ and $0\leq k\leq2k_{n}$.

Now $a_{k}(\rho_{n})=\rho_{n}^{k}\sum_{l=0}^{k}\rho_{n}^{-l}\phi_{l}$,
and for any $m\leq k\leq2k_{n}$,
\[
\sum_{l=0}^{k}\rho_{n}^{-l}\phi_{l}=\sum_{l=0}^{m}\phi_{l}-\sum_{l=0}^{m}(1-\rho_{n}^{-l})\phi_{l}+\sum_{l=m+1}^{k}\rho_{n}^{-l}\phi_{l}.
\]
Therefore, since $\smlabs{\rho_{n}^{k}}\leq1$, 
\begin{align*}
\abs{a_{k}(\rho_{n})-\rho_{n}^{k}\sum_{l=0}^{m}\phi_{l}} & \leq\sum_{l=0}^{m}\smlabs{1-\rho_{n}^{-l}}\smlabs{\phi_{l}}+\sum_{l=m+1}^{k}\smlabs{\phi_{l}}
\end{align*}
Let $m_{0}$ be chosen such that both
\[
\rho_{n}^{k}\abs{\sum_{l=0}^{m_{0}}\phi_{l}}\geq C_{1}\abs{\sum_{l=0}^{m_{0}}\phi_{l}}\geq\frac{C_{1}}{2}\smlabs{\phi}\eqdef3\blw a
\]
for all $n\geq n_{0}$, and $\sum_{l=m_{0}+1}^{\infty}\smlabs{\phi_{l}}\leq\blw a$.
Since $\rho_{n}^{-l}\goesto1$ for each $l$, there exists an $n_{1}\ge n_{0}$
such that
\begin{align*}
\smlabs{a_{k}(\rho_{n})} & \geq\rho_{n}^{k}\abs{\sum_{l=0}^{m_{0}}\phi_{l}}-\sum_{l=0}^{m_{0}}\smlabs{1-\rho_{n}^{-l}}\smlabs{\phi_{l}}-\sum_{l=m_{0}+1}^{k}\smlabs{\phi_{l}}\geq\blw a
\end{align*}
for all $n\geq n_{1}$. Taking $k_{0}\defeq2m_{0}$ and re-designating
$n_{1}$ as $n_{0}$ gives the claimed lower bound in \eqref{coefseqbnd}.

Finally, since $a_{0}=\phi_{0}$ is nonzero by \enuref{regproc},
replacing $\blw a$ by $\blw a\pmin\smlabs{\phi_{0}}$ yields a lower
bound that also applies to $\smlabs{a_{0}}$.
\end{proof}
\begin{proof}[Proof of \lemref{cfbound}]
 Since $\psi_{\err}\in L^{1}$, $\err_{0}$ has a bounded continuous
density. Thus by the Riemann-Lebesgue lemma (\citealp{Feller2}, Lem.~XV.3.3)
$\limsup_{\smlabs{\lambda}\goesto\infty}\smlabs{\psi_{\err}(\lambda)}=0$.
Further, $\psi_{\err}\in L^{1}$ cannot be periodic, and so $\smlabs{\psi_{\err}(\lambda)}<1$
for all $\lambda\neq0$ (\citealp{Feller2}, Lem.~XV.1.4); since
$\psi_{\err}$ is necessarily continuous \citep[Lem.~XV.1.1]{Feller2},
it follows that $\sup_{\smlabs{\lambda}\geq1}\smlabs{\psi_{\epsilon}(\lambda)}\geq\e^{-\gamma_{0}}$
for some $\gamma_{0}\in(0,\infty)$. By the moments theorem for characteristic
functions \citep[Lem.~XV.4.2]{Feller2}, we have $\psi_{\err}(\lambda)=1-\frac{1}{2}\lambda^{2}(1+o(1))$
as $\lambda\goesto0$. Thus there exists a $\gamma_{1}\in(0,\infty)$
such that $\smlabs{\psi_{\epsilon}(\lambda)}\leq\e^{-\gamma_{1}\lambda^{2}}$.
Taking $\gamma\defeq\gamma_{0}\pmin\gamma_{1}$ thus gives
\begin{equation}
\smlabs{\psi_{\epsilon}(\lambda)}\leq\begin{cases}
\e^{-\gamma\lambda^{2}} & \text{if }\smlabs{\lambda}\in[0,1],\\
\e^{-\gamma} & \text{if }\smlabs{\lambda}\geq1.
\end{cases}\label{eq:cferrbnd}
\end{equation}

Let $\psi_{\vartheta}(\lambda)\defeq\expect\exp(\i\lambda\sum_{k=1}^{\infty}\vartheta_{k}\err_{k})=\prod_{k=1}^{\infty}\psi_{\epsilon}(\vartheta_{k}\lambda)$;
we want to control the integral of (the modulus of) this function
over $[A,\infty)$. Without loss of generality, assume the coefficients
$\{\vartheta_{k}\}$ are ordered such that $\smlabs{\vartheta_{i}}\geq\smlabs{\vartheta_{i+1}}$.
Since 
\[
\sum_{k=1}^{\infty}\frac{3\sigma_{\vartheta}^{2}}{\pi}\cdot k^{-2}=\frac{\sigma_{\vartheta}^{2}}{2}=\frac{1}{2}\sum_{k=1}^{\infty}\vartheta_{k}^{2},
\]
the set
\[
\mathcal{K}\defeq\left\{ k\in\naturals\mid\vartheta_{k}^{2}\geq\frac{3\sigma_{\vartheta}^{2}}{\pi}\cdot k^{-2}\right\} 
\]
must be nonempty; let $k^{\ast}$ denote the smallest element of $\mathcal{K}$.

We will bound the integral of $\smlabs{\psi_{\vartheta}}$ separately
over each of the two r.h.s.\ sets in
\begin{equation}
[A,\infty)=[A,A\pmax\vartheta_{k^{\ast}}^{-1}]\union[A\pmax\vartheta_{k^{\ast}}^{-1},\infty).\label{eq:Asplit}
\end{equation}
We first have
\begin{align*}
\int_{\{\smlabs{\lambda}\in[A,A\pmax\vartheta_{k^{\ast}}^{-1}]\}}\smlabs{\psi_{\vartheta}(\lambda)}\diff\lambda & \leq\int_{\{\smlabs{\lambda}\in[A,A\pmax\vartheta_{k^{\ast}}^{-1}]\}}\prod_{k\in\mathcal{K}}\smlabs{\psi_{\epsilon}(\vartheta_{k}\lambda)}\diff\lambda\\
 & \leq_{(2)}\int_{\{\smlabs{\lambda}\geq A\}}\exp\left(-\gamma\lambda^{2}\sum_{k\in\mathcal{K}}\vartheta_{k}^{2}\right)\diff\lambda\\
 & \leq_{(3)}\int_{\{\smlabs{\lambda}\geq A\}}\exp(-\gamma\lambda^{2}\sigma_{\vartheta}^{2}/2)\diff\lambda
\end{align*}
where $\leq_{(2)}$ follows from \eqref{cferrbnd} and
\[
\smlabs{\lambda}\in[A,A\pmax\vartheta_{k^{\ast}}^{-1}]\implies\smlabs{\vartheta_{k^{\ast}}\lambda}\leq1\implies\smlabs{\vartheta_{k}\lambda}\leq1,\quad\forall k\geq k^{\ast};
\]
while $\leq_{(3)}$ follows from
\[
\sum_{k\in\mathcal{K}}\vartheta_{k}^{2}=\sigma_{\vartheta}^{2}-\sum_{k\notin\mathcal{K}}\vartheta^{2}\geq\sigma_{\vartheta}^{2}-\frac{3\sigma_{\vartheta}^{2}}{\pi}\cdot\sum_{k\notin\mathcal{K}}k^{-2}\geq\frac{\sigma_{\vartheta}^{2}}{2}.
\]

Next, we have
\begin{align*}
\int_{\{\smlabs{\lambda}\in[A\pmax\vartheta_{k^{\ast}}^{-1},\infty)\}}\smlabs{\psi_{\vartheta}(\lambda)}\diff\lambda & \leq\int_{\{\smlabs{\lambda}\in[A\pmax\vartheta_{k^{\ast}}^{-1},\infty)\}}\prod_{k=1}^{k^{\ast}}\psi_{\epsilon}(\vartheta_{k}\lambda)\diff\lambda\\
 & \leq_{(2)}\e^{-\gamma(k^{\ast}-1)}\int_{\{\smlabs{\lambda}\in[A\pmax\vartheta_{k^{\ast}}^{-1},\infty)\}}\smlabs{\psi_{\epsilon}(\vartheta_{k^{\ast}}\lambda)}\diff\lambda\\
 & \leq\e^{-\gamma(k^{\ast}-1)}\int_{\{\smlabs{\lambda}\geq A\}}\smlabs{\psi_{\epsilon}(\vartheta_{k^{\ast}}\lambda)}\diff\lambda\\
 & =\e^{-\gamma(k^{\ast}-1)}\vartheta_{k^{\ast}}^{-1}\int_{\{\smlabs{\lambda}\geq\vartheta_{k^{\ast}}A\}}\smlabs{\psi_{\epsilon}(\lambda)}\diff\lambda\\
 & \leq_{(5)}c_{0}^{-1}\sigma_{\vartheta}^{-1}\e^{-\gamma(k^{\ast}-1)}k^{\ast}\int_{\{\smlabs{\lambda}\geq c_{0}\sigma_{\vartheta}A/k^{\ast}\}}\smlabs{\psi_{\epsilon}(\lambda)}\diff\lambda,
\end{align*}
for $c_{0}\defeq(3/\pi)^{1/2}$, where $\leq_{(2)}$ holds trivially
if $k^{\ast}=1$, and otherwise follows from
\[
\smlabs{\lambda}\in[A\pmax\vartheta_{k^{\ast}}^{-1},\infty)\implies\smlabs{\vartheta_{k^{\ast}}\lambda}\geq1\implies\smlabs{\vartheta_{k}\lambda}\geq1,\quad\forall k\leq k^{\ast};
\]
while $\leq_{(5)}$ follows from $\vartheta_{k^{\ast}}^{2}\geq(3\sigma_{\vartheta}^{2}/\pi)\cdot(k^{\ast})^{-2}$.

Finally, define
\begin{multline*}
G(A;\sigma^{2},\psi_{\epsilon})\\
\defeq\int_{\{\smlabs{\lambda}\geq A\}}\exp(-\gamma\lambda^{2}\sigma^{2}/2)\diff\lambda+c_{0}^{-1}\sigma^{-1}\sup_{k\geq1}\e^{-\gamma(k-1)}k\int_{\{\smlabs{\lambda}\geq c_{0}\sigma A/k\}}\smlabs{\psi_{\epsilon}(\lambda)}\diff\lambda,
\end{multline*}
which clearly satisfies the first inequality in \eqref{cflinprocbnd},
and is decreasing in $\sigma^{2}$; the second inequality in \eqref{cflinprocbnd}
follows by evaluating $G(0;\sigma^{2},\psi_{\epsilon})$, and noting
$\sup_{k\geq1}\e^{-\gamma(k-1)}k<\infty$. It thus remains to show
that $G(A;\sigma^{2},\psi_{\epsilon})\goesto0$ as $A\goesto\infty$.
To that end, let $\epsilon>0$ and note that there exists a $k^{\prime}$
such that
\[
\e^{-\gamma(k^{\prime}-1)}k^{\prime}\int_{\reals}\smlabs{\psi_{\epsilon}(\lambda)}\diff\lambda<\epsilon.
\]
Since
\[
\e^{-\gamma(k-1)}k\int_{\{\smlabs{\lambda}\geq c_{0}\sigma A/k\}}\smlabs{\psi_{\epsilon}(\lambda)}\diff\lambda\goesto0
\]
as $A\goesto\infty$, for each fixed $k\in\{1,\ldots,k^{\prime}\}$,
the claim follows. 
\end{proof}
\begin{proof}[Proof of \lemref{algebra}]
 Making the change of variables $u=\rho^{x}$, we have
\[
\int_{1}^{a}\frac{1}{(1-\rho^{x})^{1/2}}\diff x=\frac{1}{-\log\rho}\int_{\rho^{a}}^{\rho}\frac{1}{(1-u)^{1/2}u}\diff u=\frac{1}{-\log\rho}\left[-2\tanh^{-1}\{(1-u)^{1/2}\}\right]_{\rho^{a}}^{\rho}.
\]
for $\rho\in(0,1)$, where $\tanh^{-1}(x)\defeq\frac{1}{2}\log\{(1+x)/(1-x)\}$
is inverse hyperbolic tangent function. Now set $\rho=\rho_{n}$,
for $\{\rho_{n}\}$ mildly integrated, and $a=n\eta$: and note that
$\rho_{n}\goesto1$, whereas $\rho_{n}^{\eta n}\goesto0$ by \lemref{variance}.
Then
\begin{align*}
\frac{1}{n}\int_{1}^{\eta n}\frac{1}{(1-\rho_{n}^{x})^{1/2}}\diff x & =\frac{1}{n}\cdot\frac{1}{-\log\rho_{n}}\left\{ 2\tanh^{-1}[(1-\rho_{n}^{\eta n})^{1/2}]+o(1)\right\} \\
 & \sim\frac{1}{n}\cdot\frac{\log[1-(1-\rho_{n}^{\eta n})^{1/2}]}{\log\rho_{n}}.
\end{align*}
Next, note that by two applications of L'Hôpital's rule
\begin{multline*}
\lim_{x\goesto0}\frac{\log[1-(1-x)^{1/2}]}{\log x}=\lim_{x\goesto0}\frac{\frac{1}{2}(1-x)^{-1/2}/[1-(1-x)^{1/2}]}{1/x}\\
=\frac{1}{2}\lim_{x\goesto0}\frac{x}{1-(1-x)^{1/2}}=\frac{1}{2}\lim_{x\goesto0}\frac{1}{\frac{1}{2}(1-x)^{-1/2}}=1,
\end{multline*}
whence
\[
\frac{1}{n}\cdot\frac{\log[1-(1-\rho_{n}^{\eta n})^{1/2}]}{\log\rho_{n}}\sim\frac{1}{n}\cdot\frac{\log(\rho_{n}^{\eta n})}{\log\rho_{n}}=\eta
\]
and the result follows.
\end{proof}

\end{document}